\documentclass[12pt]{amsart}
\usepackage{amssymb,amsmath,amsthm,MnSymbol,arydshln}
\usepackage{graphicx}
\usepackage{tikz-cd}
\usepackage{caption}   
\usepackage{pgfplots}
\pgfplotsset{compat=1.18} 
\usepackage{hyperref}
\usepackage[msc-links]{amsrefs}
\hypersetup{
  colorlinks   = true,	
  urlcolor     = blue,
  linkcolor    = blue,
  citecolor   = red
}
\setlength{\textwidth}{\paperwidth} 
\addtolength{\textwidth}{-2.5in}
\setlength{\textheight}{\paperheight} 
\addtolength{\textheight}{-2.1in}
\calclayout
\raggedbottom
\allowdisplaybreaks
\newtheorem{theorem}{Theorem}

\newtheorem{remark}[theorem]{Remark}
\newtheorem{corollary}[theorem]{Corollary}

\numberwithin{theorem}{section} \numberwithin{equation}{section}
\newcommand{\beq}{\begin{small} \begin{equation}}
\newcommand{\eeq}{\end{equation} \end{small}}
\newcommand{\beqn}{\begin{small} \begin{equation*}}
\newcommand{\eeqn}{\end{equation*} \end{small}}
\DeclareMathAlphabet{\mathpzc}{OT1}{pzc}{m}{it}
\newcommand\scalemath[2]{\scalebox{#1}{\mbox{\ensuremath{\displaystyle #2}}}}
\begin{document}
\title{On N\'eron-Severi lattices of Jacobian elliptic K3 surfaces}
\begin{abstract}
We classify all Jacobian elliptic fibrations on K3 surfaces with finite automorphism group. We also classify all Jacobian elliptic fibrations with finite Mordell-Weil group on K3 surfaces with infinite automorphism group and 2-elementary N\'eron-Severi lattice.  As part of the classification, we compute the lattice theoretic multiplicities of all Jacobian elliptic fibrations in both cases.
\end{abstract}
\author{Adrian Clingher}
\address{Department of Mathematics and Statistics, University of Missouri - St. Louis, St. Louis, MO 63121}
\email{clinghera@umsl.edu}
\author{Andreas Malmendier}
\address{Department of Mathematics \& Statistics, Utah State University, Logan, UT 84322}
\email{andreas.malmendier@usu.edu}
\keywords{K3 surface, Jacobian elliptic fibration}
\subjclass[2020]{14J27, 14J28}
\maketitle
\section{Introduction}
The goal of this note is as follows. First, we provide a classification for Jacobian elliptic fibrations on complex algebraic K3 surfaces with finite automorphism group. Namely, 
we list all lattices $L$ that may occur as N\'eron-Severi lattice for a K3 surface $\mathcal{X}$ with finite automorphism group and then, for each possible $L$, we determine all possible Jacobian elliptic fibrations supported on $\mathcal{X}$, in terms of reducible fiber types and Mordell-Weil groups. Second, we consider the case of K3  surfaces $\mathcal{X}$ with 2-elementary N\'eron-Severi lattice $L$ and infinite automorphism group. In this case, we classify all Jacobian elliptic fibrations with finite Mordell-Weil group in terms of reducible fiber types and actual Mordell-Weil groups. The latter group includes interesting K3 examples, such as double-sextic K3 surfaces, twisted Legendre pencils, and Kummer surfaces associated with products of two elliptic curves. The former group also includes interesting objects, such as the Shioda-Inose K3 surfaces \cite{MR728142}. Finally, we identify the Jacobian elliptic fibrations in both cases where the lattice embedding of the rank-2 hyperbolic lattice spanned by the cohomology classes associated with the elliptic fiber and the section into the polarizing lattice is unique. The existence of \emph{unique} Jacobian elliptic fibrations is crucial when proving unirationality for certain moduli spaces of K3 surfaces \cite{CM:2022}.
\par The classification results are obtained via lattice theoretic methods and are based, in essence, on a combination two classical results. The first is  Nikulin's classification \cite{MR633160b}  of hyperbolic, even, 2-elementary lattices $L$ admitting a primitive embedding into the K3 lattice $\Lambda_{K3}\cong H^{\oplus 3} \oplus E_8(-1)^{\oplus 2}$. The second is Shimada's general result \cite{MR1813537} classifying Jacobian elliptic fibrations on K3 surfaces.  A classification of  elliptic fibrations on K3 surfaces with 2-elementary N\'eron-Severi lattice and finite automorphism group was given in \cite{MR4130832}. Elliptic fibrations in the case with 2-elementary N\'eron-Severi lattice and infinite automorphism group were constructed in \cite{MR3201823}.  Some other surfaces relevant to our classification are $p$-elementary for an odd prime number $p$. If the N\'eron-Severi group of a K3 surface is $p$-elementary, then the K3 surface admits a non-symplectic automorphism of order $p$. Their elliptic fibrations were intensively studied in \cites{MR2805445, MR2443767, MR3009163}. Our result will provide a classification of all Jacobian elliptic fibrations on K3 surfaces with finite automorphism group in Theorem~\ref{prop1}.  In the case of an infinite automorphism group, we classify all Jacobian elliptic fibrations with finite Mordell-Weil group on K3 surfaces with 2-elementary N\'eron-Severi lattice in Theorem~\ref{prop2}.  Finally, in Theorem~\ref{prop3} we identify the Jacobian elliptic fibrations in both cases whose lattice theoretic multiplicities equal one. For the latter, we compute the multiplicities of all Jacobian elliptic fibrations from Theorems~\ref{prop1} and~\ref{prop2} using {\sc Sage} and {\sc Magma}. As we will show, for certain root lattices with Mordell-Weil groups $\mathbb{Z}/2\mathbb{Z}$ or $(\mathbb{Z}/2\mathbb{Z})^2$ there exist distinct overlattices that can even give rise to distinct lattice polarizations (differing by their parity). The computed multiplicity differentiates these cases effectively.
\section*{Acknowledgement}
The authors would like to thank Don Taylor,  Markus Kirschmer, and Simon Brandhorst for providing valuable help with the {\sc Magma} and {\sc Sage} programs we used for our computations. The authors also thank Alice Garbagnati and Cec\'ilia Salgado, Gabriele Nebe, John Cannon, and Alex Ghitza for helpful discussions. Furthermore, the authors thank the referees for their comments and corrections. A.C. acknowledges support from a UMSL Mid-Career Research Grant. A.M. acknowledges support from the Simons Foundation through grant no.~202367.
\section{Notation and Classical Results}
Let us start by reviewing a few classical lattice theory facts. We shall use the following standard notations: $L_1 \oplus L_2$ is orthogonal direct sum of the two lattices $L_1$ and $L_2$, $L(\lambda)$ is obtained from the lattice $L$ by multiplication of its form by $\lambda \in \mathbb{Z}$, $\langle R \rangle$ is a lattice with the matrix $R$ in some basis; $A_n$, $D_m$, and $E_k$ are the positive definite root lattices for the corresponding root systems;  $H$ is the unique even unimodular hyperbolic rank-two lattice, and $N$ is the negative definite rank-eight Nikulin lattice (as defined, for instance, in \cite{MR728142}*{Sec.~5}).  
\par Given a lattice $L$, one has the \emph{discriminant group} $ = L^\vee/L$ and its associated discriminant form, denoted $q_L$.   A lattice $L$ is then called \emph{2-elementary} if $D(L)$ is a 2-elementary abelian group, i.e., $D(L) \cong (\mathbb{Z}/2\mathbb{Z})^\ell$ where $\ell$ is the {\it length} of $L$,  the minimal number of generators of the group $D(L)$.  One also has the {\it parity} $\delta \in \{ 0,1\}$. By definition, $\delta = 0$ if $q_L(x)$ takes values in $\mathbb{Z}/2\mathbb{Z} \subset \mathbb{Q}/2\mathbb{Z}$ for all $x \in D(L)$, and $ \delta=1$ otherwise. In this context, a result of  Nikulin \cite{MR633160b}*{Thm.~4.3.2} asserts that hyperbolic, even, 2-elementary lattices admitting an embedding into the K3 lattice are uniquely determined by their rank $\rho$, length $\ell$, and parity $\delta$. For instance, $H(2)$, $D_{4n}$ (for $n \in \mathbb{N}$)  are 2-elementary lattices with $\ell=2$, $\delta=0$, while the lattices $A_1$, $E_7$, $\langle \pm 2\rangle$ are 2-elementary with $\ell=1$, $\delta=1$.
\par Based on the above, Nikulin  \cite{MR633160b}*{Tab.~1} provides a list of all hyperbolic, even, 2-elementary lattices $L$ admitting a primitive embedding in $\Lambda_{K3}$ and, hence, occurring as N\'eron-Severi lattice of a K3 surface $\mathcal{X}$. The graph is shown in Figure~\ref{fig:K3_2elet}. 
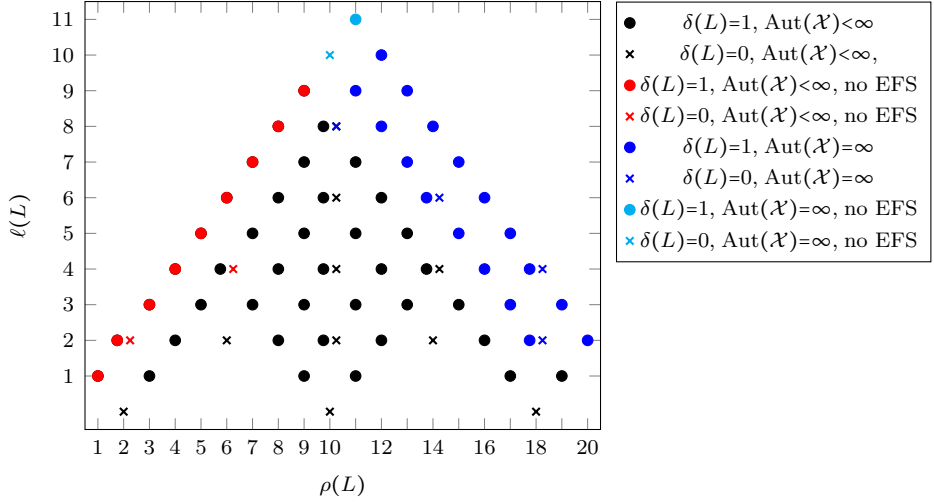
\begin{figure}[!ht]
\begin{center}
\begin{tikzpicture}[scale=1]
	\begin{axis}[
		xlabel=$\scriptstyle \rho(L)$,
		ylabel=$\scriptstyle \ell(L)$,
		tick label style={font=\tiny},
		xmin=0.5, xmax=20.5,
		xtick={1,2,3,4,5,6,7,8,9,10,11,12,13,14,15,16,17,18,19,20}, xticklabels={1,2,3,4,5,6,7,8,9,10,,12,,14,,16,,18,,20},
		ymin=-0.5, ymax=11.5,
		ytick={1,2,3,4,5,6,7,8,9,10,11}, yticklabels={1,2,3,4,5,6,7,8,9,10,11},
		legend style={legend pos=outer north east}
		]
	\addplot[only marks, mark=*, color=black]
		coordinates{ % plot 1 data set
      			(1,1)
      			(1.75,2)
      			(3,1) 		(3,3)
      			(4,2) 		(4,4)
      			(5,3) 		(5,5)
      			(5.75,4)	(6,6)
      			(7,3) 		(7,5) 		(7,7)
      			(8,2) 		(8,4) 		(8,6) 		(8,8)
      			(9,1)  	(9,3)		(9,5)  	(9,7)		(9,9)
      			(9.75,2) 	(9.75,4) 	(9.75,6) 	(9.75,8)
      			(11,1) 	(11,3) 	(11,5) 	(11,7)
     			(12,2) 	(12,4) 	(12,6) 	
      			(13,3)  	(13,5)  	
      			(13.75,4)
     			(15,3) 
      			(16,2) 		
      			(17,1) 
     			(19,1) 
    		};
	\addlegendentry{$\scriptstyle  \delta(L)=1, \; \mathrm{Aut}(\mathcal{X})<\infty$};
    	\addplot[only marks, mark=x, thick, color=black]
    		coordinates{
      			(2,0) 		
      			(6,2) 
     			(10,0) 	(10.25,2)		(10.25,4) 		(10.25,6) 		(10.25,8)
      			(14,2)  	(14.25,4) 
      			(18,0) 
		};
     	\addlegendentry{$\scriptstyle  \delta(L)=0, \; \mathrm{Aut}(\mathcal{X})<\infty,$};
	\addplot[only marks, mark=*, color=red]
    		coordinates{
      			(1,1)
      			(1.75,2)
      			(3,3)
     			(4,4)
      			(5,5)
      			(6,6)
      			(7,7)
      			(8,8)
      			(9,9)
   		};
    	\addlegendentry{$\scriptstyle  \delta(L)=1, \; \mathrm{Aut}(\mathcal{X})<\infty, \; \text{no EFS}$};
      	\addplot[only marks, mark=x, thick, color=red]
    		coordinates{
      			(2.25,2)
      			(6.25,4) 
		};
     	\addlegendentry{$\scriptstyle  \delta(L)=0, \; \mathrm{Aut}(\mathcal{X})<\infty, \; \text{no EFS}$};   
	\addplot[only marks, mark=*, color=blue]
    		coordinates{
			(11,9) 
			(12,8) 		(12,10)
			(13,7)  		(13,9)
			(13.75,6) 		(14,8)
			(15,5) 		(15,7)
			(16,4) 		(16,6)
			(17,3) 		(17,5)
			(17.75,2) 		(17.75,4)
			(19,3)
			(20,2)
  		};
    	\addlegendentry{$\scriptstyle  \delta(L)=1, \; \mathrm{Aut}(\mathcal{X})=\infty$};
	\addplot[only marks, mark=x, thick, color=blue]
    		coordinates{
      			(10.25,8) 
			(14.25,6) 
			(18.25,2) 		(18.25,4)  
		};
     	\addlegendentry{$\scriptstyle  \delta(L)=0, \; \mathrm{Aut}(\mathcal{X})=\infty$};   
	\addplot[only marks, mark=*, color=cyan]
    		coordinates{
			(9.75,10)  (11,11) 
		};
     	\addlegendentry{$\scriptstyle  \delta(L)=1, \; \mathrm{Aut}(\mathcal{X})=\infty, \; \text{no EFS}$};   
	\addplot[only marks, mark=x, thick, color=cyan]
    		coordinates{
      			(10.25,10)  
	 	};
     	\addlegendentry{$\scriptstyle  \delta(L)=0, \; \mathrm{Aut}(\mathcal{X})=\infty, \; \text{no EFS}$};   
  \end{axis}
\end{tikzpicture}
\end{center}
\captionsetup{justification=centering}
\caption{Hyperbolic, even, 2-elementary lattices $L$ occurring as N\'eron-Severi lattice of a K3 surface $\mathcal{X}$
}
\label{fig:K3_2elet}
\end{figure}
\par Let us consider the case of hyperbolic even lattices $L$ that admit a primitive embedding $ H \hookrightarrow L$. When occurring as N\'eron-Severi lattice for a 
K3 surface $\mathcal{X}$, this condition corresponds to the existence of an {\it elliptic fibration with section} (EFS) or Jacobian elliptic fibration. In this case, one has a lattice 
isomorphism $ L \simeq H \oplus K $, where $K$ is an even, negative-definite lattice. By $K^{\text{root}}$ the sub-lattice spanned by the roots of $K$, i.e., the lattice elements of self-intersection $-2$  in $K$. The factor group is denoted by $\mathpzc{W} = K / K^{\text{root}}$. The following result is due to Nikulin (see \cite{MR633160b}*{Thm.~0.2.2}):
\begin{theorem} Let $L$ be a hyperbolic lattice occurring as N\'eron-Severi lattice for a K3 surface with finite automorphism group. Assume that $L \simeq H \oplus K $. Then: \\
(i) $L$ is 2-elementary if and only if:
\begin{itemize} 
\item [(a)]   $K=  E_8(-1)^{\oplus 2} \oplus A_1(-1)$, or 
\item [(b)]  $K$ is 2-elementary, with $ K \nsimeq E_8(-2)$  and $\operatorname{rank}{K} + \ell_K \le 16 $
\end{itemize} 
 (ii) $L$ is not 2-elementary if and only if $K(-1)$ is one of the following: 
 \begin{gather*}
 A_2; \; 
 A_1 \oplus A_2, A_3; \;
 A_1^{\oplus 2} \oplus A_2, A_2^{\oplus 2},  A_1 \oplus A_3, A_4; \; 
 A_1 \oplus A_2^{\oplus 2}, A_1^{\oplus 2} \oplus A_3,  A_1 \oplus A_4, \\ A_2 \oplus A_3, A_5, D_5; 
 A_2^{\oplus 3}, A_3^{\oplus 2}, A_2 \oplus A_4, A_1 \oplus A_5, A_6, A_2 \oplus D_4, A_1 \oplus D_5, E_6; \\
 A_7, A_3 \oplus D_4, A_2 \oplus D_5, D_7, A_1\oplus E_6; \;
 A_2 \oplus E_6; \;
 A_2 \oplus E_8; \;
 A_3 \oplus E_8.
\end{gather*}
Note that in (ii) we always have $\operatorname{rank}{K} \le 11$.
\end{theorem}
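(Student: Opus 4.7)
The strategy is to reduce the statement to two ingredients: Nikulin's classification of hyperbolic K3 N\'eron-Severi lattices with finite automorphism group, and the fact that a primitive embedding $H \hookrightarrow L$ automatically yields an orthogonal direct sum $L \simeq H \oplus K$ (because $H$ is unimodular, $H^\perp$ is a direct complement). The induced isomorphism $D(L) \simeq D(K)$ shows that $L$ is 2-elementary if and only if $K$ is, with matching length $\ell$ and parity $\delta$.

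For part (i), I specialize to the 2-elementary lattices $L$ shown in Figure~\ref{fig:K3_2elet} with finite automorphism group. Writing $L \simeq H \oplus K$ with $\operatorname{rank} K = \rho(L)-2$, $\ell(K) = \ell(L)$, $\delta(K) = \delta(L)$, the cited uniqueness result of Nikulin (Thm.~4.3.2) pins down $K$ up to isomorphism whenever $K$ is indefinite; for the definite cases one consults the known list of negative-definite even 2-elementary root lattices. The inequality $\operatorname{rank} K + \ell(K) \le 16$ is then read off the bound $\rho(L) + \ell(L) \le 18$ that delimits the finite-automorphism region of Figure~\ref{fig:K3_2elet}, while the exceptional case $K = E_8(-1)^{\oplus 2} \oplus A_1(-1)$ corresponds to the unique rank-19 point, where this bound is violated. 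The subtle point is the exclusion of $E_8(-2)$: although $H \oplus E_8(-2)$ satisfies $\rho + \ell = 18$ formally, it lies \emph{outside} the finite-automorphism region, which one can verify by running a Vinberg-style reflection computation on its orthogonal complement in $\Lambda_{K3}$ to exhibit an infinite reflection group.

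For part (ii), I invoke Nikulin's companion list of non-2-elementary hyperbolic lattices of rank $\ge 3$ realized as the N\'eron-Severi lattice of a K3 surface with finite automorphism group, which is finite and explicit. The proof reduces to a case-by-case enumeration: for each such $L$, test whether $H$ admits a primitive embedding and, when it does, compute the orthogonal complement $K$ in $L$. The listed $K(-1)$ are precisely the complements arising in this way, and the bound $\operatorname{rank} K \le 11$ follows by inspection.

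The main obstacle is part (ii): verifying completeness of the list requires a careful genus-theoretic computation for each candidate, determining the discriminant form of $K$ and then checking that the genus so prescribed contains a unique isomorphism class. This uniqueness, automatic for indefinite lattices of sufficient rank by standard Nikulin theory but nontrivial for the small definite complements that actually appear, is what ultimately produces the clean list of root sublattices $A_n$, $D_n$, $E_n$ and their small direct sums recorded in (ii). A secondary difficulty is the matching of boundary cases in (i): one must confirm that $E_8(-1)^{\oplus 2} \oplus A_1(-1)$ and $E_8(-2)$ exhaust the exceptional behavior at $\rho(L)+\ell(L)=18$, which again rests on Nikulin's parity-and-length invariants combined with the explicit Vinberg analysis of the borderline case.
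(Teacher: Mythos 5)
The first thing to note is that the paper offers no proof of this statement at all: it is quoted verbatim as a classical result, attributed to Nikulin \cite{MR633160b}*{Thm.~0.2.2}. So there is no internal argument to compare yours against; the only question is whether your reconstruction would stand on its own. Your part (i) is in the right spirit: since $H$ is unimodular, $L\simeq H\oplus K$ forces $D(L)\simeq D(K)$ with matching length and parity, so the 2-elementarity of $L$ and of $K$ are equivalent, and one then reads the finite-automorphism condition off Nikulin's Table~1 (Figure~\ref{fig:K3_2elet}), with the rank-19 point $(\rho,\ell,\delta)=(19,1,1)$ accounting for case (a) and the point $(10,8,0)$ accounting for the exclusion of $E_8(-2)$. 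Note, though, that the Vinberg-style computation you propose for $H\oplus E_8(-2)$ is unnecessary: the infiniteness of the automorphism group there is already part of the table you are citing.

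The genuine gap is in part (ii). The ``companion list of non-2-elementary hyperbolic lattices realized as the N\'eron-Severi lattice of a K3 surface with finite automorphism group,'' restricted to those admitting a primitive embedding of $H$, \emph{is} statement (ii); invoking it and then ``enumerating'' is circular as written. What you would actually need is Nikulin's full classification of 2-reflective hyperbolic lattices (equivalently, of N\'eron-Severi lattices with finite automorphism group, with no hypothesis that $H$ splits off), whose proof goes through reflection groups and Vinberg's algorithm and is nowhere reconstructed in your sketch --- it is exactly the content being cited, both by you and by the paper. Relatedly, your ``main obstacle'' paragraph misidentifies the difficulty: uniqueness of $K$ within its genus is not what is at stake in (ii), since the list is simply a list of orthogonal complements of $j(H)$ in the lattices of Nikulin's classification; the completeness of that classification is the hard part, and it is not a genus-theoretic computation.
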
 
\par Finally, for a \emph{Jacobian elliptic surface} $\mathcal{X}$ we shall denote the elliptic fibration by $\pi: \mathcal{X} \to \mathbb{P}^1$, the zero-section by $\sigma$, and the Mordell-Weil group of sections by $\operatorname{MW}(\mathcal{X}, \pi) $.  A complete list of the possible singular fibers in an associated Weierstrass model was given by Kodaira~\cite{MR0184257}: it encompasses two infinite families $(I_n, I_n^*, n \ge0)$ and six exceptional cases $(II, III, IV, II^*, III^*, IV^*)$. Upon resolving the singular fibers, we obtain the corresponding reducible fibers of a smooth model. For fibers of type $I_n$, $I_m^*$, $IV^*$, $II^*$, and $II^*$, one obtains reducible fibers of type $A_{n-1}$, $D_{m+4}$, $E_6$, $E_7$, and $E_8$, respectively. Similarly, singular fibers of type $III$ and $IV$ produces reducible fibers of type $A_1$ and $A_2$, respectively. Shioda \cite{MR1081832} proved that there is a canonical group isomorphism $\mathpzc{W} \simeq \operatorname{MW}(\mathcal{X}, \pi) $, identifying $\mathpzc{W}$ with the Mordell-Weil group of the corresponding Jacobian elliptic fibration $(\pi,\sigma)$. 
\section {The Finite Automorphism Group Case}
We have the following:
\begin{theorem}
\label{prop1}
Let $\pi: \mathcal{X} \to \mathbb{P}^1$ be a Jacobian elliptic K3 surface with N\'eron-Severi lattice $L$. The K3 surface $\mathcal{X}$ has a finite automorphism group if and only if $L$ appears in Tables~\ref{tab:K3JEF_a},~\ref{tab:K3JEF_b}. Then, $\mathcal{X}$ admits exactly the Jacobian elliptic fibration(s) with the reducible fibers and Mordell-Weil group $(K^{\text{root}}, \mathpzc{W})$ listed in the right column. 
\end{theorem}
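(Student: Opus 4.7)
The plan is to combine Nikulin's classification of N\'eron-Severi lattices with finite automorphism group with Shimada's enumeration of Jacobian fibrations. The starting point is the theorem quoted immediately above: if $L$ is a hyperbolic even lattice that occurs as the N\'eron-Severi lattice of a K3 with finite automorphism group and admits a primitive embedding $H \hookrightarrow L$, then $L \simeq H \oplus K$ with $K$ falling into one of the explicit families listed in (i)--(ii). This already pins down the candidate list of $L$'s in Tables~\ref{tab:K3JEF_a} and~\ref{tab:K3JEF_b}.

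To show that every $L$ in the candidate list actually arises as $\mathrm{NS}(\mathcal{X})$ for a finite-automorphism-group K3, I would appeal to Nikulin's existence criterion for primitive embeddings $L \hookrightarrow \Lambda_{K3}$ combined with surjectivity of the period map, with finiteness of $\mathrm{Aut}(\mathcal{X})$ then following from Nikulin's criterion in terms of the Weyl chamber decomposition of the positive cone. Conversely, given $\mathcal{X}$ with $\mathrm{Aut}(\mathcal{X})$ finite and a Jacobian fibration $\pi$, the classes of a smooth fiber and the zero-section span a primitive copy of $H$ inside $L = \mathrm{NS}(\mathcal{X})$, so $L$ lies in the list; this establishes the ``if and only if'' half of the statement.

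The main substance is the enumeration of Jacobian elliptic fibrations on each $\mathcal{X}$. By standard arguments, such fibrations are in bijection with $O(L)$-orbits of primitive embeddings $H \hookrightarrow L$. For each such embedding, set $K = H^\perp \subset L$, obtaining a negative-definite lattice with $L \simeq H \oplus K$. The reducible fibers are then read off from the root sublattice $K^{\mathrm{root}}$ via the Kodaira correspondence recalled in the previous section, while the Mordell-Weil group is recovered from $\mathpzc{W} = K/K^{\mathrm{root}}$ by Shioda's theorem. To enumerate the orbits, I would apply Shimada's algorithm: classify negative-definite lattices $K$ of rank $\rho(L)-2$ whose discriminant form matches $q_L$ (which it must, since $H$ is unimodular and so $q_{H \oplus K} = q_K$), then for each candidate $K$ verify $H \oplus K \simeq L$ and count the $O(L)$-orbits of embeddings that realize it.

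The main obstacle is executing this enumeration systematically for each lattice $L$ in the tables. For small $\rho(L)$ one can argue by hand using Nikulin's discriminant form calculus and root system combinatorics, but in higher rank the genus of $K$ may contain many isomorphism classes and the induced $O(L)$-action on primitive $H$-embeddings becomes intricate; this is precisely where the paper invokes \textsc{Sage} and \textsc{Magma}. A recurring subtlety is that a prescribed pair $(K^{\mathrm{root}}, \mathpzc{W})$ may admit several non-isomorphic realizations as an overlattice of $K^{\mathrm{root}}$ inside $L$, so one must check the gluing of $K^{\mathrm{root}}$ to $H$ through the discriminant form carefully in order to avoid over-- or under--counting, and to establish the multiplicities that feed into Theorem~\ref{prop3}.
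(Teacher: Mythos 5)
Your proposal is correct and follows essentially the same route as the paper: Nikulin's classification pins down the candidate lattices $L$, and the fibrations are enumerated lattice-theoretically by computing the orthogonal complements $K$ of primitive embeddings $H \hookrightarrow L$ and reading off $(K^{\text{root}}, \mathpzc{W})$ via Shioda's theorem, with computer algebra handling the higher-rank cases. The only organizational difference is that the paper shortcuts the enumeration by observing that finiteness of $\mathrm{Aut}(\mathcal{X})$ forces $\operatorname{rank}\operatorname{MW}(\pi,\sigma)=0$, so every fibration is extremal and can be matched directly against Shimada's published table of trivial lattices, rather than re-running the genus computation from scratch.
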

\begin{remark}
Theorem~\ref{prop1} incorporates results from previous works, as follows:
\begin{enumerate}
\item For $(\rho, \ell, \delta) = (14, 2, 0), (14, 4, 0), (14, 4, 1)$ all Jacobian elliptic fibrations are constructed in \cites{Clingher:2020baq}. This work provides birational models for the corresponding K3 surfaces as quartic projective hypersurfaces. 
\item For $(\rho, \ell, \delta) = (16, 2, 1)$, all Jacobian elliptic fibrations are constructed in \cites{MR4160930, MR4544843}.
\item For $(\rho, \ell, \delta) = (17, 1, 1)$ the associated lattice polarized K3 surfaces are the Shioda-Inose surfaces (see \cites{MR728142}) covering the Kummer surfaces associated with the Jacobian of a general curve of genus two. The two possible Jacobian elliptic fibrations in this case are constructed and discussed in \cites{MR2427457, MR2824841, MR2935386, MR3366121,MR3712162}.
\item For $(\rho, \ell, \delta) = (18, 0, 0)$ the associated lattice polarized K3 surfaces are the Shioda-Inose K3 surfaces covering the Kummer surfaces associated with the product of two non-isogenous elliptic curves. Quartic surface models in this case are given in \cite{MR2369941}. The two Jacobian elliptic fibrations in this case are constructed in \cites{MR578868}.
\item Other examples of equations relating the elliptic fibrations of K3 surfaces with 2-elementary N\'eron-Severi lattices and double sextics or quartic hypersurfaces were provided in \cites{MR4130832, MR3882710}
\end{enumerate}
\end{remark}
\begin{proof}
If $\rho=1$ then the corresponding K3 surfaces have no elliptic fibrations. If $\rho=2$ then the automorphism groups of all Jacobian elliptic fibrations are evidently finite.  For 2-elementary lattices the results in \cites{MR1029967, MR633160b} prove that a general $L$-polarized K3 surface with $L$ appearing as entry in the left column have a finite automorphism group. Thus, any Jacobian elliptic fibration supported on a a K3 surface $\mathcal{X}$ with N\'eron-Severi lattice isomorphic to $L$ must have a Mordell-Weil group of vanishing rank. In \cite{MR1813537} Shimada provides a complete list of trivial lattices that exit for Jacobian elliptic K3 surfaces with Mordell-Weil groups of vanishing rank. Computing $\ell(L)$ and $\delta(L)$ for each 2-elementary lattice generated by these Jacobian elliptic fibrations we generate the matching entries in the right columns for all 2-elementary lattices. This result is then extended beyond the case of 2-elementary lattices using the results in \cite{MR633160b}*{Thm.~0.2.2} and \cite{MR3165023}. Only sporadic cases are added, for example the lattice $H\oplus E_6(-1) \oplus A_2(-1)$; no cases are added in rank bigger than or equal to 14. A detailed proof covering the cases of rank 14 (where several elliptic fibrations are supported for each length) was already given in \cites{MR4130832, Clingher:2020baq}.
\end{proof}
Let us also note the following:
\begin{corollary}
All K3 surfaces $\mathcal{X}$ with 2-elementary N\'eron-Severi lattice $L$ that do not support a Jacobian elliptic fibration are listed in Table~\ref{tab:K3JEFb}.
\end{corollary}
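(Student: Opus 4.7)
The plan is to reduce the corollary to a lattice-theoretic existence check and then run it against Nikulin's census in Figure~\ref{fig:K3_2elet}. A K3 surface $\mathcal{X}$ supports a Jacobian elliptic fibration if and only if its N\'eron--Severi lattice $L$ admits a primitive embedding of the unimodular hyperbolic plane $H$. Since $H$ is unimodular, any such embedding is split, so this is equivalent to an orthogonal decomposition
\[
L \cong H \oplus M
\]
with $M$ an even negative definite lattice of rank $\rho(L)-2$.

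The second ingredient is that the unimodularity of $H$ transports the discriminant form of $L$ unchanged onto $M$, so that $M$ is itself 2-elementary with invariants $(\rho(L)-2,\,\ell(L),\,\delta(L))$. Conversely, by Nikulin's uniqueness theorem \cite{MR633160b}*{Thm.~4.3.2}, any such $M$ reconstructs $L$ when summed with $H$. The question of whether $\mathcal{X}$ supports a Jacobian elliptic fibration therefore reduces to the purely combinatorial question: does an even negative definite 2-elementary lattice with invariants $(\rho(L)-2,\,\ell(L),\,\delta(L))$ exist?

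The final step is a case-by-case walk through Figure~\ref{fig:K3_2elet} using Nikulin's existence criteria for negative definite 2-elementary lattices. The trivial rank inequality $\operatorname{rank}(M)\ge \ell(M)$ translates to $\rho(L)\ge \ell(L)+2$, which already disqualifies the rank-one case $(1,1,1)$, the diagonal chain $(k,k,1)$ for $2\le k\le 9$ in the finite-automorphism range, the lattice $H(2)$ with invariants $(2,2,0)$, and the infinite-automorphism diagonal points $(10,10,0)$ and $(11,11,1)$. The one remaining off-diagonal candidate $(6,4,0)$, for which the rank inequality does hold, must be handled by direct inspection: the would-be complement $M$ would have invariants $(4,4,0)$, and the parity condition $\delta=0$ at maximal length obstructs the existence of such an even negative definite lattice.

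The main obstacle lies precisely in this last step: confirming that once the rank-deficit obstructions are accounted for, only the sporadic $(6,4,0)$ failure survives, and that every other entry of Figure~\ref{fig:K3_2elet} does admit a primitive $H$-embedding. In practice the parity and signature criteria of Nikulin's theorem \cite{MR633160b}*{Thm.~4.3.2} can be checked mechanically (and verified in \textsc{Magma} and \textsc{Sage} as in the proof of Theorem~\ref{prop1}). Assembling the resulting list of nonexistence cases yields exactly Table~\ref{tab:K3JEFb}, from which both directions of the corollary follow at once.
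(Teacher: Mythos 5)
Your proposal is correct, but it takes a genuinely different route from the paper's. The paper argues by Picard number: for $\rho\le 9$ it uses finiteness of $\mathrm{Aut}(\mathcal{X})$ (forcing Mordell--Weil rank zero) together with Shimada's exhaustive list of frames \cite{MR1813537}, observing that none of the lattices of Table~\ref{tab:K3JEFb} is realized there; for $\rho=10,11$ it appeals to Nikulin's Theorem~0.2.2 to rule out a splitting $H\oplus K$. You instead reduce the entire corollary, uniformly in $\rho$, to the existence of the negative definite orthogonal complement $M$ of a unimodular copy of $H$, which must be even, $2$-elementary with invariants $(\rho-2,\ell,\delta)$, and you settle this with the trivial inequality $\operatorname{rank}M\ge\ell(M)$ plus one sporadic parity check. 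The inequality does dispose of every entry of Table~\ref{tab:K3JEFb} except $H(2)\oplus D_4(-1)$, and your treatment of the remaining case $(4,4,0)$ is sound: a negative definite even $2$-elementary lattice with $\operatorname{rank}=\ell=4$ satisfies $M=2M^{\vee}$, so halving its form yields a rank-$4$ negative definite unimodular lattice, forcing $M\cong\langle-2\rangle^{\oplus 4}$ and hence $\delta=1$. What your approach buys is self-containedness (no appeal to Shimada's tables, and a single argument covering all ranks); what it costs is that the completeness direction still requires checking that a suitable $M$ exists for every remaining point of Nikulin's list, a step you correctly flag but leave as a mechanical verification --- comparable in rigor to the paper's own appeal to its cited classifications.
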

\begin{proof}
For $\rho \le 9$, Nikulin \cite{MR633160b} proved that the corresponding K3 surface has a finite automorphism group. However, we note that there is no Jacobian elliptic fibration in Shimada's list \cite{MR1813537} which realizes any of the lattices in question. For $\rho=10, 11$, one may use \cite{MR633160b}*{Thm.~0.2.2} to show that the lattices cannot be written as $H \oplus K$.
\end{proof}
\begin{table}[!ht]
\begin{tabular}{c|clc|c}
$\rho$ &  \multicolumn{1}{c}{$L$}  & $\det{q_L}$ & $\delta$ & $\mathrm{Aut}(\mathcal{X})$ \\
\hline
\hline
$1$ & $\langle 2 \rangle$ & $2^1$ & $1$ & finite  \\
\hline
$2$ & $H(2)$ & $2^2$ & $0$ & finite  \\
\hdashline
$2$ & $\langle 2 \rangle \oplus \langle -2 \rangle$ & $2^2$ & $1$ & finite  \\
\hline
$3$ & $H(2) \oplus A_1(-1)$ & $2^3$ & $1$ & finite \\
\hline
$4$ & $H(2) \oplus A_1(-1)^{\oplus 2}$ & $2^4$ & $1$ & finite  \\
\hline
$5$ & $H(2) \oplus A_1(-1)^{\oplus 3}$ & $2^5$ & $1$ & finite  \\
\hline
$6$ & $H(2) \oplus D_4(-1)$ & $2^4$ & $0$ & finite \\
\hdashline
$6$ & $H(2) \oplus A_1(-1)^{\oplus 4}$ & $2^6$ & $1$ & finite  \\
\hline
$7$ & $H(2) \oplus A_1(-1)^{\oplus 5}$ & $2^7$ & $1$ & finite \\
\hline
$8$ & $H(2) \oplus A_1(-1)^{\oplus 6}$ & $2^8$ & $1$ & finite  \\
\hline
$9$ & $H(2) \oplus A_1(-1)^{\oplus 7}$ & $2^9$ & $1$ & finite  \\
\hline
$10$ & $H(2) \oplus E_8(-2)$ & $2^{10}$ & $0$ & infinite \\
\hdashline
$10$ & $H(2) \oplus A_1(-1)^{\oplus 8}$ & $2^{10}$ & $1$ & infinite \\
\hline
$11$ & $H(2) \oplus A_1(-1)^{\oplus 9}$ & $2^{11}$ & $1$ & infinite \\
\hline
\end{tabular}
\captionsetup{justification=centering}
\caption{2-elementary K3 lattices not supporting a Jacobian elliptic fibration}
\label{tab:K3JEFb}
\end{table}
\section{The Infinite Automorphism Group Case}
We have the following:
\begin{theorem}
\label{prop2}
Let $\pi: \mathcal{X} \to \mathbb{P}^1$ be a Jacobian elliptic K3 surface with a N\'eron-Severi lattice $L$ that is 2-elementary. The K3 surface $\mathcal{X}$ has an infinite automorphism group if and only if $L$ appears in Tables~\ref{tab:K3JEF2_a}--\ref{tab:K3JEF2_c}. In each case, the Jacobian elliptic fibrations with finite Mordell-Weil group supported on $\mathcal{X}$ are indicated in the right column,  in terms of their root lattice $K^{\text{root}}$ and Mordell-Weil group $\mathpzc{W}$. Moreover, in each case $\mathcal{X}$ supports at least one additional Jacobian elliptic fibration with  $\operatorname{rank}\operatorname{MW}(\mathcal{X}, \pi) >0$.
\end{theorem}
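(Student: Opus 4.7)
The plan is to mirror the strategy used for Theorem~\ref{prop1}, but now filtering the 2-elementary lattices by infinite automorphism group and restricting attention to the extremal case. Concretely, I would begin by isolating, from the Nikulin graph in Figure~\ref{fig:K3_2elet}, the 2-elementary hyperbolic lattices $L$ that correspond to K3 surfaces with infinite $\mathrm{Aut}(\mathcal{X})$ (the blue-marked points), and within these select the ones admitting a decomposition $L \simeq H \oplus K$; the excluded cases are precisely those in Table~\ref{tab:K3JEFb}. This yields the list of lattices $L$ that will appear in the left columns of Tables~\ref{tab:K3JEF2_a}--\ref{tab:K3JEF2_c}.

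Next, for each such $L$, I would enumerate its extremal Jacobian fibrations. By Shioda's isomorphism $\mathpzc{W} \simeq \operatorname{MW}(\pi,\sigma)$, an extremal fibration corresponds to a primitive embedding $H \hookrightarrow L$ whose orthogonal complement $K$ satisfies $\operatorname{rank} K^{\text{root}} = \operatorname{rank} K = \rho(L)-2$. The main combinatorial engine is Shimada's complete classification \cite{MR1813537} of Jacobian elliptic K3 surfaces with Mordell-Weil rank zero: for each entry $(K^{\text{root}}, \mathpzc{W})$ in Shimada's list, one reconstructs the lattice $L = H \oplus K$ (with $K$ the Shioda-Tate overlattice of $K^{\text{root}}$ glued by $\mathpzc{W}$), determines whether $L$ is 2-elementary, and if so computes $(\rho, \ell, \delta)$. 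Matching these triples against the infinite-automorphism part of Nikulin's table assigns each Shimada entry to its corresponding row in Tables~\ref{tab:K3JEF2_a}--\ref{tab:K3JEF2_c}, and the computation of $\ell$ and $\delta$ (parity taken from the discriminant form of $H \oplus K$) populates the right columns. In practice this is a finite case-by-case check that is best automated in {\sc Sage}/{\sc Magma}, exactly as used later for Theorem~\ref{prop3}.

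For the final assertion that each such $\mathcal{X}$ also admits a Jacobian fibration with $\operatorname{rank} \operatorname{MW}(\pi,\sigma) > 0$, I would argue as follows. Since $\mathrm{Aut}(\mathcal{X})$ is infinite, the lattice $L$ lies outside the Sterk--Nikulin finite-automorphism range, and hence the Kov\'acs/Sterk criterion guarantees that $L$ contains isotropic classes whose orthogonal frames have positive-rank root-free summand. One then exhibits, for each $L$ in Tables~\ref{tab:K3JEF2_a}--\ref{tab:K3JEF2_c}, a primitive embedding $H \hookrightarrow L$ with orthogonal complement $K'$ such that $\operatorname{rank} K' - \operatorname{rank} (K')^{\text{root}} \ge 1$; this can be done uniformly by splitting off a second copy of $H$ (when $L$ contains $H^{\oplus 2}$, which is automatic once $\rho \ge 10$ and the parity and length allow it) or, in the few exceptional lattices, by pointing to explicit fibrations appearing in \cite{MR3201823}, \cite{MR4130832}.

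The main obstacle will be the bookkeeping in step two: Shimada's list is long, and ensuring that every extremal fibration with a 2-elementary frame $H \oplus K$ is captured, and that the resulting $(\rho,\ell,\delta)$ is computed with the correct parity, requires a careful and exhaustive comparison. In particular, distinguishing the $\delta=0$ from $\delta=1$ cases at fixed $(\rho,\ell)$ demands a direct inspection of the discriminant form $q_L = q_K(-1)$, since the glue vectors supplied by $\mathpzc{W}$ can alter the parity in subtle ways. Once this computation is organized systematically (as in the proof of Theorem~\ref{prop1}), the existence of the additional positive-rank fibration is essentially a corollary, leaving the extremal classification as the substantive content of the theorem.
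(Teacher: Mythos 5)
Your first two steps---reading off the 2-elementary lattices with infinite automorphism group from Nikulin's table, discarding those with no decomposition $H\oplus K$, and then matching Shimada's list of rank-zero frames $(K^{\text{root}},\mathpzc{W})$ against each $L$ by computing $(\rho,\ell,\delta)$ of the resulting overlattice---are essentially the paper's argument, which explicitly ``follows the same strategy as in the proof of Theorem~\ref{prop1}'' and likewise singles out the pairs $(\rho,\ell)$ where both parities occur (namely $(14,6)$, $(14,8)$, $(18,2)$, $(18,4)$) as the cases requiring an explicit determination of $\delta$ from the glue vectors or from the literature. So the substantive classification content of your proposal is sound and coincides with the paper's route.

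The gap is in your argument for the final assertion, that each such $\mathcal{X}$ carries an additional Jacobian fibration with $\operatorname{rank}\operatorname{MW}(\pi,\sigma)>0$. Your proposed ``uniform'' construction---splitting off a second copy of $H$, ``which is automatic once $\rho\ge 10$''---cannot work: $L$ is hyperbolic of signature $(1,\rho-1)$, while $H^{\oplus 2}$ has signature $(2,2)$, so $L$ never contains $H^{\oplus 2}$. What you actually need is a primitive embedding $j\colon H\hookrightarrow L$ whose orthogonal complement $K'$ (negative definite of rank $\rho-2$) satisfies $\operatorname{rank} K'>\operatorname{rank}(K')^{\text{root}}$, and exhibiting such an embedding case by case, or via an unnamed ``Kov\'acs/Sterk criterion,'' is not justified as stated. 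The paper disposes of this point in one line by invoking Nikulin \cite{MR633160b}*{Thm.~10.2.1}, which ties the infinitude of $\mathrm{Aut}(\mathcal{X})$ directly to the existence of an elliptic fibration with positive Mordell--Weil rank for these lattices; you should either cite that result or replace your construction with a genuine exhibition of a non-extremal frame for every row of Tables~\ref{tab:K3JEF2_a}--\ref{tab:K3JEF2_c} (including the case $L=H\oplus E_8(-2)$, where \emph{every} Jacobian fibration has $K^{\text{root}}=0$ and $\mathpzc{W}\cong E_8(2)$ of rank $8$, so there is no extremal fibration at all---a case your uniform argument would also have to accommodate).
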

\begin{remark}
Theorem~\ref{prop2} incorporates results from previous works, as follows:
\begin{enumerate}
%\item[]
\item For $(\rho, \ell, \delta) = (10, 8,0)$, the corresponding lattice-polarized K3 surfaces are double covers of a general rational elliptic surface and do not admit Jacobian elliptic fibrations with finite Mordell-Weil group. This K3 family is discussed in \cite{MR4069236}*{Sec.~4.5}.
\item For $(\rho, \ell, \delta) = (16, 6,1)$, the corresponding  K3 surface is a special type of double-sextic K3 surface. In this case, the possible Jacobian elliptic fibrations are classified by Kloosterman \cite{MR2254405}.
\item For $(\rho, \ell, \delta) = (16+k, 6-k, 1)$ with $k=0, 1, 2, 3$, the associated lattice polarized K3 surfaces are constructed, as twisted Legendre pencils, in \cites{MR4494119, MR1013162, MR894512, MR1023921, MR2854198, MR1877757}.
\item For $(\rho, \ell, \delta) = (18, 4,0)$,  the associated lattice-polarized K3 surfaces ares general Kummer surfaces associated with products of two non-isogenous elliptic curves. 
The Jacobian elliptic fibrations in this case are classified in \cites{MR1013073, MR2409557}.
\end{enumerate}
\end{remark}
\begin{proof}
We follow the same strategy as in the proof of Theorem~\ref{prop1}. For $(\rho, \ell) = (14, 8)$, the fibration with twelve singular fibers of type $I_2$ and Mordell-Weil group $(\mathbb{Z}/2\mathbb{Z})^2$ was discussed in detail in \cite{MR1871336}.  For $(\rho, \ell) = (14, 6)$ and $(\rho, \ell)=(18,2)$, one determines by explicit lattice theoretic calculations whether $\delta=0$ or $\delta=1$, in the case of fibrations with non-trivial Mordell-Weil group. This approach was discussed in \cites{Clingher:2020baq}. For $(\rho, \ell) = (18, 4)$ we apply results from \cites{MR1013073, MR2409557} to determine, in the case of fibrations with non-trivial Mordell-Weil group, whether $\delta=0$ or $\delta=1$.
\par  Finally, for each K3 surface $\mathcal{X}$ considered in this theorem there is at least one Jacobian elliptic fibration with $\operatorname{rank}\operatorname{MW}(\mathcal{X}, \pi)  >0$.  This follows from a theorem of Nikulin \cite{MR633160b}*{Thm.~10.2.1}. 
\end{proof}
\begin{remark}
In \cite{MR1432379} Nishiyama proposed an alternate way of classifying Jacobian elliptic fibrations, including the ones with infinite Mordell-Weil groups.  However, we will be completing the task of computing the lattice theoretic multiplicities of all Jacobian elliptic fibrations with finite Mordell-Weil group in this article. The classification of the Jacobian elliptic fibrations with an infinite Mordell-Weil group for the K3 surfaces in Tables~\ref{tab:K3JEF2_a}--\ref{tab:K3JEF2_c} will be presented elsewhere.
\end{remark}
\section {Multiplicities of the Jacobian elliptic fibrations}
\label{multiplicities}
Lastly, let us investigate the number of distinct primitive lattice embeddings $H \hookrightarrow L$, following the approach in \cite{FestiVeniani20}. Assume $j \colon H \hookrightarrow L$ is such a primitive embedding. Denote by $K$ the orthogonal complement of $j(H)$ in $L$ and denote by $K^{\text{root}}$ the sub-lattice spanned by the roots of $K$, i.e., the lattice elements of self-intersection $-2$  in $K$. The factor group is denoted by $\mathpzc{W} = K / K^{\text{root}}$. It follows that $L = j(H) \oplus K$. Moreover, the lattice $K$ is negative-definite of rank $\ell = \rho -2$, and its discriminant group and pairing satisfy
\beq
\label{discr1}
 \Big( D( K ) , \, q_K  \Big) \ \simeq \ \Big( D(L), \, q_L \Big)  \,,
\eeq
with $D(L) \cong  \mathbb{Z}_2 ^\ell$. Now assume that for a K3 surface $\mathcal{X}$ with $\operatorname{NS}(\mathcal{X})=L$ we have a second primitive embedding $j' \colon H \hookrightarrow L$, such that the orthogonal complement of the image $j'(H)$, denoted $K'$, is isomorphic to $K$. We would like to see under what conditions $j$ and $j'$ correspond to Jacobian elliptic fibrations isomorphic under $\mathrm{Aut}(\mathcal{X}) $. By standard lattice-theoretic arguments (see  \cite{MR525944}*{Prop.~ 1.15.1}), there will exist an isometry $ \gamma \in O(L) $ such that $j' = \gamma \circ j$. The isometry $\gamma$ has a counterpart $ \gamma^* \in O(D(K)) $ obtained as image of $\gamma$ under the group homomorphism
\beq 
\label{izo22}
O(L) \ \rightarrow \  
O\big(D(L)\big) \ \simeq \ O\big(D(K)\big) \,.
\eeq
The isomorphism in (\ref{izo22}) is due to the decomposition $L = j(H) \oplus K$ and, as such, it depends on the lattice embedding $j$. 
\par Denote the group $O(D(K)) $ by $\mathpzc{A}$. There are two subgroups of $\mathpzc{A}$ that are relevant to our discussion. The first subgroup $\mathpzc{B} \leqslant \mathpzc{A}$  is given as the image of the following group homomorphism:
\beq
\label{eqn:map_B}
O(K) \ \simeq \  
\big \{ 
\varphi \in O(L) \ \vert \  \varphi \circ j(H) = j(H) 
\big \} 
 \ \hookrightarrow \ O(L) \ \rightarrow \ O\big(D(L)\big) \ \simeq \ O\big(D(K)\big)  \ .
\eeq
The second subgroup $\mathpzc{C} \leqslant \mathpzc{A}$ is obtained as the image of following group homomorphism:
\beq
O_h(\mathrm{T}_{\mathcal{X}}) \ \hookrightarrow \  O(\mathrm{T}_{\mathcal{X}}) 
\ \rightarrow \ O \big( D ( \mathrm{T}_{\mathcal{X}} ) \big)  \ \simeq \ 
 O\big(D(L)\big) \ \simeq \ O\big(D(K)\big)  \ .
\eeq
Here $\mathrm{T}_{\mathcal{X}} $ denotes the transcendental lattice of the K3 surface $\mathcal{X}$ and $O_h(\mathrm{T}_{\mathcal{X}})$ is given by the isometries of 
$\mathrm{T}_{\mathcal{X}} $ that preserve the Hodge decomposition. Furthermore, one has $ D(\operatorname{NS}(\mathcal{X})) \simeq D(\mathrm{T}_{\mathcal{X}})  $  with $q_L = -q_{\mathrm{T}_{\mathcal{X}}} $, as $\operatorname{NS}(\mathcal{X})=L$ and $\mathrm{T}_{\mathcal{X}}$ is the orthogonal complement of $\operatorname{NS}(\mathcal{X}) $ with respect to a unimodular lattice. 
\par Consider then the correspondence
\beq
\label{corresp33} 
 H \  \overset{j}{\hookrightarrow} \ L \qquad  \rightsquigarrow \qquad \mathpzc{C} \,  \gamma^* \mathpzc{B} \,,
 \eeq
 that associates to a lattice embedding $H \hookrightarrow L$ a double coset in $\mathpzc{C}  \backslash \mathpzc{A}/ \mathpzc{B}$. As proved in  \cite{FestiVeniani20}*{Thm~2.8},  the map (\ref{corresp33}) establishes a one-to-one correspondence between Jacobian elliptic fibrations on $\mathcal{X}$ with $ j(H)^{\perp} \simeq K$, up to the action of the automorphism group ${\rm Aut}(\mathcal{X}) $ and the elements of the double coset set $\mathpzc{C}  \backslash \mathpzc{A}/ \mathpzc{B}$. The number of elements in the double coset is referred by Festi and Veniani as the \emph{multiplicity} associated with the frame $(K^{\text{root}}, \mathpzc{W})$.
 \begin{remark}
 From \cite{MR1892313} it follows that $O_h(\mathrm{T}_{\mathcal{X}})  = \{ \pm \mathrm{id} \}$ for a K3 surface $\mathcal{X}$ of odd Picard number. The same is true for a K3 surface $\mathcal{X}$ of even Picard number if the period $\omega_\mathcal{X} \in \mathrm{T}_{\mathcal{X}} \otimes \mathbb{C}$ is very general. One then has $| \mathcal{C} | =1$, and the multiplicity of a frame $(K^{\text{root}}, \mathpzc{W})$ equals  $|\mathpzc{A}|/|\mathpzc{B}|$.
  \end{remark}
\begin{remark} 
$\mathpzc{A}$ is the orthogonal group of  the torsion quadratic form $q_K$. This is different from the orthogonal group of the associated bilinear form.  In the case $\rho = \ell +2$ and $L= H \oplus K$ with $K(-1) = A_1^{\oplus \ell}$, the orthogonal group $O(\ell, \mathbb{F}_2)$ consists of elements $g \in \mathrm{GL}(\ell, \mathbb{F}_2)$ with $g\cdot g^t = \mathbb{I}_\ell$.  The order $|O(\ell, \mathbb{F}_2)|$ was computed in \cite{MR1189139} and differs from $|\mathpzc{A}|$ for $\ell \ge 4$.
\end{remark}
 We have the following:
 \begin{theorem}
 \label{prop3}
In Tables~ \ref{tab:K3JEF_a}-\ref{tab:K3JEF2_c} for each frame $(K^{\text{root}}, \mathpzc{W})$ the ratio $|\mathpzc{A}|/|\mathpzc{B}|$ is given. In particular, the lattice embedding $H \hookrightarrow L$ is unique for each frame with $|\mathpzc{A}|/|\mathpzc{B}|=1$.
\end{theorem}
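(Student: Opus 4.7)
The plan is to compute, frame by frame, the two finite group orders $|\mathpzc{A}|=|O(D(K),q_K)|$ and $|\mathpzc{B}|=|\operatorname{im}(O(K)\to O(D(K)))|$ that govern the double coset set $\mathpzc{C}\backslash\mathpzc{A}/\mathpzc{B}$ controlling primitive embeddings $H\hookrightarrow L$ with orthogonal complement isometric to $K$, and then read off the uniqueness statement from the observation that when $|\mathpzc{A}|/|\mathpzc{B}|=1$ the double coset set is automatically a singleton regardless of $\mathpzc{C}$.

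First I would enumerate, for each line of Tables~\ref{tab:K3JEF_a}--\ref{tab:K3JEF2_c}, the negative-definite rank-$(\rho-2)$ lattice $K$ prescribed by the frame $(K^{\text{root}},\mathpzc{W})$; since both $K^{\text{root}}$ and the glue group $\mathpzc{W}$ are given, $K$ is pinned down inside its genus, and in our range of ranks Shimada's tables in \cite{MR1813537} already list a specific model. From that model one reads off the discriminant form $(D(K),q_K)$, which by \eqref{discr1} coincides with $(D(L),q_L)$; this is the input for $\mathpzc{A}$. The computation of $\mathpzc{A}=O(q_K)$ is then a finite group calculation on the $p$-primary components of the torsion quadratic form, and I would carry it out in \textsc{Magma} using the built-in functionality for orthogonal groups of torsion quadratic forms, exactly as the acknowledgements already indicate.

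Next, for $\mathpzc{B}$, I would realize $K$ as an explicit Gram matrix, compute $O(K)$ (or a set of generators sufficient to surject onto its image in $O(D(K))$) with \textsc{Magma}'s automorphism routines for definite lattices, and push these generators through the reduction map $O(K)\to O(D(K))$. Because $K^{\text{root}}$ typically has large Weyl group $W(K^{\text{root}})$, and $W(K^{\text{root}})$ acts trivially on $D(K)$ (reflections in roots preserve the discriminant form since roots lie in $K$ itself), the image $\mathpzc{B}$ is effectively controlled by $O(K)/W(K^{\text{root}})$, i.e., by the symmetries permuting Dynkin components together with glue-preserving outer automorphisms; this simplifies the computations substantially. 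Taking the quotient $|\mathpzc{A}|/|\mathpzc{B}|$ gives the multiplicity recorded in the right-hand column of each table. Finally, for the uniqueness statement: whenever $|\mathpzc{A}|/|\mathpzc{B}|=1$ we have $\mathpzc{A}=\mathpzc{B}$, so the double coset $\mathpzc{C}\backslash\mathpzc{A}/\mathpzc{B}$ is a single point, and the bijection of \cite{FestiVeniani20}*{Thm.~2.8} recalled in \eqref{corresp33} forces the primitive embedding $H\hookrightarrow L$ with orthogonal complement $K$ to be unique up to $\operatorname{Aut}(\mathcal{X})$.

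The main obstacle is the computation of $\mathpzc{B}$ in the higher-rank, high-length cases, notably the 2-elementary cases with $\ell=6,8$ and mixed ADE root contributions: here $O(K)$ can be sizable, and a naive enumeration of its action on $D(K)\cong\mathbb{F}_2^\ell$ is expensive, so one must exploit the block structure of $K$ and the fact that reflections in the root part act trivially on $D(K)$, reducing the problem to the much smaller outer component. A secondary subtlety is to verify in each case that the chosen Gram matrix for $K$ actually lies in the correct isometry class inside its genus (several genera in this range contain more than one class), which I would check by a discriminant-form and spinor-genus comparison before trusting the final ratio.
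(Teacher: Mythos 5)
Your proposal is correct and follows essentially the same route as the paper: the authors likewise construct $K$ explicitly (as an overlattice of $K^{\text{root}}$ glued by the vectors in Tables~\ref{tab:max_vects_Z2_finite}--\ref{tab:max_vects_Z2xZ2_infinite}), compute $|\mathpzc{A}|$ and $|\mathpzc{B}|$ by machine (in {\sc Sage}, with {\sc Magma} for a few high-length cases), and conclude uniqueness from $\mathpzc{B}=\mathpzc{A}$ forcing the double coset set $\mathpzc{C}\backslash\mathpzc{A}/\mathpzc{B}$ to be a singleton. Your additional observations (triviality of the Weyl group action on $D(K)$, and verifying the isometry class within the genus) are sensible implementation details but do not change the argument.
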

\begin{remark}
In~Tables~\ref{tab:max_vects_Z2_finite}-\ref{tab:max_vects_Z2xZ2_infinite} we provide vectors that together with $K^{\text{root}}$ span the overlattice $K$ for $\mathpzc{W}=\mathbb{Z}/2\mathbb{Z}$  and $\mathpzc{W}=(\mathbb{Z}/2\mathbb{Z})^2$, respectively. If for a given root lattice there exist distinct overlattices $K, K'$ the entries are printed in {\color{blue} blue} if $H \oplus K \cong H \oplus K'$ and in {\color{green} green} if $H \oplus K \not \cong H \oplus K'$. Distinct overlattices arise when the section and 2-torsion section of a Jacobian elliptic fibration can intersect end-nodes of a reducible fiber of type $\widetilde{D}_n$ with $n>4$ either on the same fork or on opposite forks.
\end{remark}
\begin{remark}
For the cases $(\rho, \ell, \delta) = (16,6, 1), (18,2,0), (18,4,0)$ in Table~\ref{tab:K3JEF2_b} the ratio of orders $|\mathpzc{A}|/|\mathpzc{B}|$ is in agreement with the multiplicities as computed in \cite{FestiVeniani20}.
\end{remark}
\begin{proof}
The statement is proven by a computation in the {\sc Sage} class {\tt QuadraticForm}.   For a given lattice {\tt K}, the discriminant group is computed using the command {\tt D = K.discriminant\char`_group()}. The automorphism groups are computed using
\beqn
\text{{\tt O=K.orthogonal\char`_group()} and {\tt A=D.orthogonal\char`_group()}.}
\eeqn
Images of the generators are computed using {\tt B=D.orthogonal\char`_group(O.gens())}.
\par For $\mathpzc{W}=\mathbb{Z}/2\mathbb{Z}$  the lattice $K$ is the overlattice spanned by $K^{\text{root}}$ and one additional lattice vector $\vec{v}_{\text{max}}$.  For $\mathpzc{W}=(\mathbb{Z}/2\mathbb{Z})^2$ one has two additional vectors $\vec{v}_{\text{max}, i}$ with $i=1, 2$. The lattice $K$ is the overlattice spanned by $K^{\text{root}}$ and the additional lattice vectors $\vec{v}_{\text{max}, 1}, \vec{v}_{\text{max}, 2}$. These vectors are in the orthogonal complement of the sub-lattice spanned by the section $\sigma$ and the smooth fiber class $F$. The lattice vectors can be computed using the properties of the elliptic fibration. Using the same notation for the bases of root lattices as in \cite{MR1813537}*{Sec.~6} and ordering the bases by ADE-type and from lowest to highest rank, the vectors $\vec{v}_{\text{max}}$ are given in~Table~\ref{tab:max_vects_Z2_finite} for the overlattices (with finite automorphism group)  in Tables~\ref{tab:K3JEF_a},~\ref{tab:K3JEF_b}.  The vectors $\vec{v}_{\text{max}}$ are given in~Tables~\ref{tab:max_vects_Z2_infinite_a}, \ref{tab:max_vects_Z2_infinite_b} for the overlattices (with infinite automorphism group) in Tables~\ref{tab:K3JEF2_a}-\ref{tab:K3JEF2_c}.  The vectors $\vec{v}_{\text{max},1}, \vec{v}_{\text{max},2}$ are given in~Table~\ref{tab:max_vects_Z2xZ2_infinite} for the overlattices (with infinite automorphism group) in Tables~\ref{tab:K3JEF2_a}-\ref{tab:K3JEF2_c}. Each overlattice is then computed using the command {\tt overlattice}. The corresponding Gram matrix is computed using the command {\tt gram\char`_matrix}.  The orders of the automorphism groups are computed using the command {\tt order}. In the case $\rho = \ell +2$ and $L= H \oplus K$ with $K(-1) = A_1^{\oplus \ell}$ and $\ell=9, 10$ we used {\sc Magma} to compute $|\mathpzc{A}|$.
\par For each lattice with $|\mathpzc{A}|/|\mathpzc{B}|=1$ one then checks that  the images of the generators for $O(K)$ also generate $O(D(K))$. Thus, the map in Equation~(\ref{eqn:map_B}) is surjective and the multiplicity associated with $(K^{\text{root}}, \mathpzc{W})$ equals one.
\end{proof}
\begin{table}[!ht]
\scalemath{0.8}{
\begin{tabular}{c|clc|cc|c}
$\rho$ &  \multicolumn{1}{c}{$L= H \oplus K$}  & $\det{q_L}$ & $\delta$ & $K^{\text{root}}(-1)$ & $\mathpzc{W}$ & $|\mathpzc{A}|/|\mathpzc{B}|$\\
\hline
\hline
$2$	& $H$ 								& $2^0$ 		& $0$	& -- 				& $\{ \mathbb{I} \}$ 						& --\\
\hline
$3$	& $H \oplus A_1(-1)$						& $2^1$ 		& $1$	& $A_1$ 			& $\{ \mathbb{I} \}$ 						& 1\\
\hline
$4$ 	& $H \oplus A_1(-1)^{\oplus 2}$				& $2^2$ 		& $1$	& $2 A_1$			& $\{ \mathbb{I} \}$ 						& 1\\
\hdashline
$4$	& $H \oplus A_2$ 						& $3$ 		& -- 		& $A_2$ 			& $\{ \mathbb{I} \}$ 						& 1\\
\hline
$5$	& $H\oplus A_1(-1)^{\oplus 3}$				& $2^3$		& $1$	& $3 A_1$			& $\{ \mathbb{I} \}$ 						& 1\\
\hdashline
$5$	& $H\oplus A_3(-1)$ 						& $2^2$ 		& -- 		& $A_3$ 			& $\{ \mathbb{I} \}$ 						& 1\\
\hdashline
$5$ 	& $H\oplus A_2(-1) \oplus A_1(-1)$ 			& $2 \cdot 3$	& -- 		& $A_2 + A_1$		& $\{ \mathbb{I} \}$ 						& 1\\
\hline
$6$	& $H\oplus D_4(-1)$						& $2^2$ 		& $0$ 	& $D_4$ 			& $\{ \mathbb{I} \}$ 						& 1\\
\hdashline
$6$	& $H\oplus A_1(-1)^{\oplus 4}$ 				& $2^4$ 		& $1$ 	& $4 A_1$ 		& $\{ \mathbb{I} \}$ 						& 1\\
\hdashline
$6$	& $H\oplus A_4(-1)$						& $5$ 		& -- 		& $A_4$ 			& $\{ \mathbb{I} \}$ 						& 1\\
\hdashline
$6$	& $H\oplus A_3(-1) \oplus A_1(-1)$ 			& $2^3$ 		& -- 		& $A_3 + A_1$ 		& $\{ \mathbb{I} \}$ 						& 1\\
\hdashline
$6$ 	& $H\oplus A_2(-1)^{\oplus 2}$ 				& $3^2$ 		& -- 		& $2 A_2$ 		& $\{ \mathbb{I} \}$ 						& 1\\
\hdashline
$6$	& $H\oplus A_2(-1) \oplus A_1(-1)^{\oplus 2}$	& $2^2 \cdot 3$	& --		& $A_2 + 2 A_1$	& $\{ \mathbb{I} \}$ 						& 1\\
\hline
$7$ 	& $H\oplus D_4(-1) \oplus A_1(-1)$ 			& $2^3$ 		& $1$ 	& $D_4 + A_1$ 	& $\{ \mathbb{I} \}$ 						& 1\\
\hdashline
$7$	& $H\oplus A_1(-1)^{\oplus 5}$ 				& $2^5$ 		& $1$ 	& $5 A_1$ 		& $\{ \mathbb{I} \}$ 						& 1\\
\hdashline
$7$	& $H\oplus D_5(-1)$						& $2^2$ 		& -- 		& $D_5$ 			& $\{ \mathbb{I} \}$ 						& 1\\
\hdashline
$7$ 	& $H\oplus A_5(-1)$ 						& $2\cdot 3$ 	& -- 		& $A_5$ 			& $\{ \mathbb{I} \}$						& 1\\
\hdashline
$7$ 	& $H\oplus A_4(-1)  \oplus A_1(-1)$ 			& $2\cdot 5$ 	& -- 		& $A_4 + A_1$ 		& $\{ \mathbb{I} \}$ 						& 1\\
\hdashline
$7$	& $H\oplus A_3(-1)  \oplus A_2(-1)$ 			& $2^2\cdot 3$	& -- 		& $A_3 + A_2$ 		& $\{ \mathbb{I} \}$ 						& 1\\
\hdashline
$7$	& $H\oplus A_3(-1)  \oplus A_1(-1)^{\oplus 2}$	& $2^4$ 		& -- 		& $A_3 + 2 A_1$ 	& $\{ \mathbb{I} \}$ 						& 1\\
\hdashline
$7$ 	& $H\oplus A_2(-1)^{\oplus 2}  \oplus A_1(-1)$ 	& $2 \cdot 3^2$	& -- 		& $2 A_2 + A_1$ 	& $\{ \mathbb{I} \}$ 						& 1\\
\hline
$8$	& $H\oplus D_6(-1)$ 					& $2^2$		& $1$ 	& $D_6$ 			& $\{ \mathbb{I} \}$ 						& 1\\
\hdashline
$8$	& $H\oplus D_4(-1) \oplus A_1(-1)^{\oplus 2}$ 	& $2^4$ 		& $1$ 	& $D_4 + 2 A_1$ 	& $\{ \mathbb{I} \}$ 						& 1\\
\hdashline
$8$ 	& $H\oplus A_1(-1)^{\oplus 6}$ 				& $2^6$ 		& $1$ 	& $6 A_1$ 		& $\{ \mathbb{I} \}$ 						& 2\\%2
\hdashline
$8$	& $H\oplus E_6(-1)$ 						& $3$ 		& -- 		& $E_6$ 			& $\{ \mathbb{I} \}$ 						& 1\\
\hdashline
$8$	& $H\oplus D_5(-1) \oplus A_1(-1)$ 			& $2^3$ 		& -- 		& $D_5 + A_1$ 	& $\{ \mathbb{I} \}$ 						& 1\\
\hdashline
$8$	& $H\oplus D_4(-1) \oplus A_2(-1)$ 			& $2^2 \cdot 3$	& -- 		& $D_4 + A_2$ 	& $\{ \mathbb{I} \}$ 						& 1\\
\hdashline
$8$	& $H\oplus A_6(-1)$ 						& $7$ 		& -- 		& $A_6$ 			& $\{ \mathbb{I} \}$ 						& 1\\
\hdashline
$8$	& $H\oplus A_5(-1) \oplus A_1(-1)$ 			& $2^2 \cdot 3$	& -- 		& $A_5 + A_1$ 		& $\{ \mathbb{I} \}$ 						& 1\\
\hdashline
$8$	& $H\oplus A_4(-1) \oplus A_2(-1)$ 			& $3 \cdot 5$ 	& -- 		& $A_4 + A_2$ 		& $\{ \mathbb{I} \}$ 						& 1\\
\hdashline
$8$ 	& $H\oplus A_3(-1)^{\oplus 2}$ 				& $2^4$ 		& -- 		& $2 A_3$ 		& $\{ \mathbb{I} \}$ 						& 1\\
\hdashline
$8$ 	& $H\oplus A_2(-1)^{\oplus 3}$ 				& $3^3$ 		& -- 		& $3 A_2$ 		& $\{ \mathbb{I} \}$ 						& 1\\
\hline
$9$ 	& $H\oplus E_7(-1)$ 						& $2$ 		& $1$ 	& $E_7$ 			& $\{ \mathbb{I} \}$ 						& 1\\
\hdashline
$9$	& $H\oplus D_6(-1) \oplus A_1(-1)$ 			& $2^3$ 		& $1$ 	& $D_6 + A_1$ 	& $\{ \mathbb{I} \}$ 						& 1\\
\hdashline
$9$	& $H\oplus D_4(-1) \oplus A_1(-1)^{\oplus 3}$	& $2^5$ 		& $1$ 	& $D_4 + 3 A_1$ 	& $\{ \mathbb{I} \}$ 						& 2\\%2
\hdashline
$9$	& $H\oplus A_1(-1)^{\oplus 7}$ 				& $2^7$ 		& $1$ 	& $7 A_1$ 		& $\{ \mathbb{I} \}$ 						& 8\\%8=2^3
\hdashline
$9$	& $H\oplus E_6(-1) \oplus A_1(-1)$ 			& $2\cdot 3$ 	& -- 		& $E_6 + A_1$ 		& $\{ \mathbb{I} \}$ 						& 1\\
\hdashline
$9$	& $H\oplus D_7(-1)$ 					& $2^2$ 		& -- 		& $D_7$			& $\{ \mathbb{I} \}$ 						& 1\\
\hdashline
$9$	& $H\oplus D_5(-1) \oplus A_2(-1)$ 			& $2^2\cdot 3$	& -- 		& $D_5 + A_2$ 	& $\{ \mathbb{I} \}$ 						& 1\\
\hdashline
$9$	& $H\oplus D_4(-1) \oplus A_3(-1)$ 			& $2^4$ 		& -- 		& $D_4 + A_3$ 	& $\{ \mathbb{I} \}$ 						& 1\\
\hdashline
$9$	& $H\oplus A_7(-1)$ 						& $2^3$ 		& -- 		& $A_7$ 			& $\{ \mathbb{I} \}$ 						& 1\\
\hline
$10$	& $H\oplus E_8(-1)$ 						& $1$ 		& $0$ 	& $E_8$ 			& $\{ \mathbb{I} \}$ 						& 1\\
\hdashline	
$10$	& $H\oplus D_8(-1)$ 					& $2^2$ 		& $0$ 	& $D_8$ 			& $\{ \mathbb{I} \}$						& 1\\
\hdashline
$10$	& $H\oplus E_7(-1) \oplus A_1(-1)$ 			& $2^2$ 		& $1$ 	& $E_7 + A_1$ 		& $\{ \mathbb{I} \}$						& 1\\
\hdashline
$10$	& $H\oplus D_4(-1)^{\oplus 2}$ 			& $2^4$ 		& $0$ 	& $2 D_4$ 		& $\{ \mathbb{I} \}$ 						& 1\\
\hdashline
$10$	& $H\oplus D_6(-1) \oplus A_1(-1)^{\oplus 2}$ 	& $2^4$ 		& $1$ 	& $D_6 + 2 A_1$ 	& $\{ \mathbb{I} \}$ 						& 2\\%2
\hdashline
$10$	& $H(2)\oplus D_4(-1)^{\oplus 2}\cong H \oplus N$ & $2^6$ 	& $0$ 	& $8 A_1$ 		& $\mathbb{Z}/2\mathbb{Z}$  				& 1\\
\hdashline
$10$	& $H\oplus D_4(-1) \oplus A_1(-1)^{\oplus 4}$ 	& $2^6$ 		& $1$ 	& $D_4 + 4 A_1$ 	& $\{ \mathbb{I} \}$						& 8\\%8=2^3
\hdashline
$10$	& $H\oplus A_1(-1)^{\oplus 8}$ 				& $2^8$ 		& $1$ 	& $8 A_1$ 		& $\{ \mathbb{I} \}$ 						& 64\\%64=2^6
\hdashline
$10$	& $H\oplus E_6(-1) \oplus A_2(-1)$ 			& $3^2$ 		& -- 		& $E_6 + A_2$ 		& $\{ \mathbb{I} \}$ 						& 1\\
\hline
\end{tabular}}
\captionsetup{justification=centering}
\caption{Jacobian elliptic K3 surfaces with $\mathrm{Aut}(\mathcal{X})< \infty$}
\label{tab:K3JEF_a}
\end{table}
\begin{table}[!ht]
\scalemath{0.8}{
\begin{tabular}{c|clc|cc|c}
$\rho$ &  \multicolumn{1}{c}{$L= H \oplus K$}  & $\det{q_L}$ & $\delta$ & $K^{\text{root}}(-1)$ & $\mathpzc{W}$ & $|\mathpzc{A}|/|\mathpzc{B}|$\\
\hline
\hline
$11$ 	& $H\oplus E_8(-1) \oplus A_1(-1)$ 				& $2^1$ 	& $1$ 	& $E_8 + A_1$ 		& $\{ \mathbb{I} \}$					& 1\\
\hdashline
$11$ 	& $H\oplus E_7(-1) \oplus A_1(-1)^{\oplus 2}$ 		& $2^3$ 	& $1$ 	& $E_7 + 2 A_1$ 	& $\{ \mathbb{I} \}$					& 1\\
& $\quad \cong H \oplus D_8(-1) \oplus A_1(-1)$ 			& 		& 		& $D_8 + A_1$ 	& $\{ \mathbb{I} \}$					& 1\\
\hdashline
$11$ 	& $H\oplus D_6(-1) \oplus A_1(-1)^{\oplus 3}$ 		& $2^5$ 	& $1$ 	& $D_6 + 3 A_1$ 	& $\{ \mathbb{I} \}$					& 6\\ %6=2*3
& $\quad \cong H \oplus D_4(-1)^{\oplus 2} \oplus A_1(-1)$ 	& 		& 		& $2 D_4 + A_1$ 	& $\{ \mathbb{I} \}$					& 1\\
\hdashline
$11$ 	& $H\oplus D_4(-1) \oplus A_1(-1)^{\oplus 5}$ 		& $2^7$ 	& $1$ 	& $D_4 + 5 A_1$ 	& $\{ \mathbb{I} \}$					& 56\\%56=2^3*7
& 												& 		& 		& $9 A_1$ 		& $\mathbb{Z}/2\mathbb{Z}$			& 1\\
\hline
$12$ 	& $H\oplus E_8(-1) \oplus A_1(-1)^{\oplus 2}$ 		& $2^2$ 	& $1$ 	& $E_8 + 2 A_1$ 	& $\{ \mathbb{I} \}$					& 1\\
& $\quad \cong H \oplus D_{10}(-1)$ 					& 		& 		& $D_{10}$ 		& $\{ \mathbb{I} \}$					& 1\\
\hdashline
$12$ 	& $H\oplus E_7(-1) \oplus A_1(-1)^{\oplus 3}$ 		& $2^4$ 	& $1$ 	& $E_7 + 3 A_1$ 	& $\{ \mathbb{I} \}$					& 2\\%2
& $\quad \cong H \oplus D_8(-1)  \oplus A_1(-1)^{\oplus 2}$ 	& 		& 		& $D_8 + 2  A_1$ 	& $\{ \mathbb{I} \}$					& 3\\%3
& $\quad \cong H \oplus D_6(-1)  \oplus D_4(-1)$ 			& 		& 		& $D_6 + D_4$ 	& $\{ \mathbb{I} \}$					& 1\\
\hdashline
$12$ 	& $H\oplus D_6(-1) \oplus A_1(-1)^{\oplus 4}$ 		& $2^6$ 	& $1$ 	& $D_6 + 4 A_1$ 	& $\{ \mathbb{I} \}$					& 30\\ %30=3*3*5
& $\quad \cong H \oplus D_4(-1)^{\oplus 2}  \oplus A_1(-1)^{\oplus 2}$ & 	& 		& $2 D_4 + 2 A_1$ 	& $\{ \mathbb{I} \}$					& 10\\ %10=2*5
&												&		&		& $D_4 + 6 A_1$ 	& $\mathbb{Z}/2\mathbb{Z}$ 			& 1\\
\hdashline
$12$ 	& $H\oplus E_8(-1) \oplus A_2(-1)$ 				& $3$ 	& -- 		& $E_8 + A_2$ 		& $\{ \mathbb{I} \}$					& 1\\
\hline
$13$ 	& $H\oplus E_8(-1) \oplus A_1(-1)^{\oplus 3}$ 		& $2^3$ 	& $1$ 	& $E_8 + 3 A_1$ 	& $\{ \mathbb{I} \}$					& 1\\
& $\quad \cong H \oplus E_7(-1)\oplus D_4(-1)$ 			& 		& 		& $E_7 + D_4$ 	& $\{ \mathbb{I} \}$					& 1\\
& $\quad \cong H \oplus D_{10}(-1)\oplus A_1(-1)$ 			& 		& 		& $D_{10} + A_1$ 	& $\{ \mathbb{I} \}$					& 3\\ %3
\hdashline
$13$ 	& $H\oplus E_7(-1) \oplus A_1(-1)^{\oplus 4}$ 		& $2^5$ 	& $1$ 	& $E_7 + 4 A_1$ 	& $\{ \mathbb{I} \}$					& 5\\ %5
& $\quad \cong H \oplus D_8(-1)  \oplus A_1(-1)^{\oplus 3}$ 	& 		& 		& $D_8 + 3 A_1$ 	& $\{ \mathbb{I} \}$					& 10\\ %10=2*5
& $\quad \cong H \oplus D_6(-1)  \oplus D_4(-1) \oplus A_1(-1)$ & 		& 		& $D_6 + D_4 + A_1$ & $\{ \mathbb{I} \}$					& 10\\ %10=2*5
&												&		&		& $D_6 + 5 A_1$ 	& $\mathbb{Z}/2\mathbb{Z}$ 			& 1\\
\hdashline
$13$ 	& $H\oplus E_8(-1) \oplus A_3(-1)$ 				& $2^2$ 	& -- 		& $E_8 + A_3$ 		& $\{ \mathbb{I} \}$ 					& 1\\
&												&		&		& $D_{11}$ 		& $\{ \mathbb{I} \}$ 					& 1\\
\hline
$14$ 	& $H\oplus E_8(-1) \oplus D_4(-1)$ 				& $2^2$ 	& $0$ 	& $E_8 + D_4$ 	& $\{ \mathbb{I} \}$					& 1\\
& $\quad \cong H \oplus D_{12}(-1)$ 					& 		& 		& $D_{12}$ 		& $\{ \mathbb{I} \}$					& 3\\ %3
\hdashline
$14$ 	& $H\oplus D_8(-1) \oplus D_4(-1)$ 				& $2^4$ 	& $0$ 	& $D_8 + D_4$ 	& $\{ \mathbb{I} \}$ 					& 10\\%10=2*5
&												&		& 		& $E_7 + 5 A_1$ 	& $\mathbb{Z}/2\mathbb{Z}$ 			& 1\\
%T=H+H(2)+D_4
\hdashline
$14$ 	& $H\oplus E_8(-1) \oplus A_1(-1)^{\oplus 4}$ 		& $2^4$ 	& $1$ 	& $E_8 + 4 A_1$ 	& $\{ \mathbb{I} \}$					& 1\\
& $\quad \cong H \oplus E_7(-1) \oplus D_4(-1) \oplus A_1(-1)$& 		& 		& $E_7 + D_4 + A_1$& $\{ \mathbb{I} \}$					& 4\\%4=2^2
& $\quad \cong H \oplus D_{10}(-1) \oplus A_1(-1)^{\oplus 2}$ 	& 		& 		& $D_{10} + 2 A_1$ 	& $\{ \mathbb{I} \}$					& 6\\ %6=2*3
& $\quad \cong H \oplus D_{6}(-1)^{\oplus 2}$ 				& 		& 		& $2 D_6$ 		& $\{ \mathbb{I} \}$					& 3\\ %3
&												&		&		& $D_8 + 4 A_1$ 	& $\mathbb{Z}/2\mathbb{Z}$ 			& 1\\
\hline
$15$		& $H \oplus E_8(-1)  \oplus D_4(-1) \oplus A_1(-1)$ 	& $2^3$ 	& $1$ 	& $E_8 + D_4 + A_1$& $\{ \mathbb{I} \}$					& 1\\
& $\cong H \oplus E_7(-1) \oplus D_6(-1)$ 				&		&		& $E_7 + D_6$ 	& $\{ \mathbb{I} \}$					& 3\\%3
& $\cong H \oplus D_{12}(-1) \oplus A_1(-1)$ 				&		&		& $D_{12} + A_1$ 	& $\{ \mathbb{I} \}$					& 3\\%3
&												&		&		& $D_{10} + 3 A_1$	&$\mathbb{Z}/2\mathbb{Z}$ 			& 1\\
\hline
$16$ 	& $H \oplus E_8(-1)  \oplus D_6(-1)$ 			& $2^2$ 	& $1$ 	& $E_8 + D_6$ 	& $\{ \mathbb{I} \}$ 					& 1\\
& $\cong H \oplus E_7(-1) \oplus E_7(-1)$ 				&		&		& $2 E_7$ 		& $\{ \mathbb{I} \}$ 					& 1\\
& $\cong H \oplus D_{14}(-1)$ 							&		&		& $D_{14}$ 		& $\{ \mathbb{I} \}$ 					& 1\\
&												&		&		& $D_{12} + 2 A_1$ 	&$\mathbb{Z}/2\mathbb{Z}$ 			& 1\\
\hline
$17$ 	& $H \oplus E_8(-1)  \oplus E_7(-1)$ 			& $2^1$ 	& $1$ 	& $E_8 + E_7$ 	& $\{ \mathbb{I} \}$ 					& 1\\
& 												& 		& 		& $D_{14} + A_1$ 	& $\mathbb{Z}/2\mathbb{Z}$ 			& 1\\
\hline
$18$ 	& $H \oplus E_8(-1)  \oplus E_8(-1)$ 			& $2^0$ 	& $0$ 	& $2 E_8$ 		& $\{ \mathbb{I} \}$ 					& 1\\
& 												& 		& 		& $D_{16}$ 		& $\mathbb{Z}/2\mathbb{Z}$ 			& 1\\
\hline
$19$ & $H \oplus E_8(-1)  \oplus E_8(-1)  \oplus A_1(-1)$ 	& $2^1$ 	& $1$ 	& $2 E_8 + A_1$ 	& $\{ \mathbb{I} \}$					& 1\\
& 												& 		& 		& $E_7 + D_{10}$ 	& $\mathbb{Z}/2\mathbb{Z}$ 			& 1\\
& 												& 		& 		& $D_{16} + A_1$ 	& $\mathbb{Z}/2\mathbb{Z}$ 			& 1\\
\hline
\end{tabular}}
\captionsetup{justification=centering}
\caption{Jacobian elliptic K3 surfaces with $\mathrm{Aut}(\mathcal{X})< \infty$ (cont'd)}
\label{tab:K3JEF_b}
\end{table}
\begin{table}[!ht]
\scalemath{0.8}{
\begin{tabular}{c|clc|cc|c}
$\rho$ &  \multicolumn{1}{c}{$L= H \oplus K$}  & $\det{q_L}$ & $\delta$ & $K^{\text{root}}(-1)$ & $\mathpzc{W}$ & $|\mathpzc{A}|/|\mathpzc{B}|$\\
\hline
\hline
$10$		& $H \oplus E_8(-2)$ 								& $2^8$ 	& $0$	& --						& $E_8(2)$				& --\\
\hline
$11$ 	& $H\oplus A_1(-1)^{\oplus 9}$ 							& $2^9$ 	& $1$ 	& $9 A_1$ 				& $\{ \mathbb{I} \}$			& 960\\%960=2^6 * 3 * 5
\hline
$12$ 	& $H\oplus D_4(-1) \oplus A_1(-1)^{\oplus 6}$ 				& $2^8$ 	& $1$ 	& $D_4 + 6 A_1$ 			& $\{ \mathbb{I} \}$			& 672\\%672=2^5*3*7
& 														& 		& 		& $10 A_1$ 				& $\mathbb{Z}/2\mathbb{Z}$	& 36\\%36=2^2*3^2
\hdashline
$12$ 	& $H\oplus A_1(-1)^{\oplus 10}$ 						& $2^{10}$& $1$ 	& $10 A_1$ 				& $\{ \mathbb{I} \}$			& 26112\\%2^9*3*17
\hline
$13$ 	& $H\oplus D_6(-1) \oplus A_1(-1)^{\oplus 5}$ 				& $2^7$ 	& $1$ 	& $D_6 + 5 A_1$ 			& $\{ \mathbb{I} \}$			& 216\\%216=2^3*3^3
& $\quad \cong H\oplus D_4(-1)^{\oplus 2} \oplus A_1(-1)^{\oplus 3}$ 	& 		&		& $2 D_4 + 3 A_1$ 			& $\{ \mathbb{I} \}$			& 120\\%120=2^3*3*5
& 														& 		& 		& $D_4+ 7 A_1$ 			& $\mathbb{Z}/2\mathbb{Z}$	& 36\\%36=2^2*3^2
\hdashline
$13$ 	& $H\oplus D_4(-1) \oplus A_1(-1)^{\oplus 7}$ 				& $2^9$ 	& $1$ 	& $D_4 + 7 A_1$ 			& $\{ \mathbb{I} \}$			& 13056\\%394813440/30240
& 														& 		& 		& $11 A_1$ 				& $\mathbb{Z}/2\mathbb{Z}$	& 1632\\%394813440/241920
\hline
$14$ 	& $H\oplus D_4(-1)^{\oplus 3}$ 						& $2^6$ 	& $0$ 	& $3 D_4$ 				& $\{ \mathbb{I} \}$			& 40\\%40=2^3*5
&	 													&		& 		& {\color{green}$D_6 + 6 A_1$}	& $\mathbb{Z}/2\mathbb{Z}$	& 36\\%36=2^2*3^2
\hdashline
$14$		& $H\oplus D_6(-1) \oplus D_4(-1) \oplus  A_1(-1)^{\oplus 2}$	& $2^6$ 	& $1$ 	& $D_6 + D_4 +2 A_1$ 		& $\{ \mathbb{I} \}$			& 80\\%80=2^4*5
& $\quad \cong H\oplus D_8(-1) \oplus A_1(-1)^{\oplus 4}$ 			& 		&		& $D_8 + 4 A_1$ 			& $\{ \mathbb{I} \}$			& 40\\%40=2^3*5
& $\quad \cong H\oplus E_7(-1) \oplus A_1(-1)^{\oplus 5}$ 			&		& 		& $E_7 + 5 A_1$ 			& $\{ \mathbb{I} \}$			& 16\\%16=2^4
&	 													&		& 		& {\color{green}$D_6 + 6 A_1$}	& $\mathbb{Z}/2\mathbb{Z}$	& 16\\%16=2^4
& 														&		& 		& $2 D_4 + 4 A_1$			& $\mathbb{Z}/2\mathbb{Z}$	& 10\\%10=2*5
%T=H+H(2)+4 A_1
\hdashline
$14$ 	& $H\oplus D_4(-1)^{\oplus 2} \oplus  A_1(-1)^{\oplus 4}$ 		& $2^8$ 	& $1$ 	& $2 D_4 + 4 A_1$ 			& $\{ \mathbb{I} \}$			& 1920\\%1920=2^7*3*5
& $\quad \cong H\oplus D_6(-1) \oplus  A_1(-1)^{\oplus 6}$ 			& 		& 		& $D_6 + 6 A_1$ 			& $\{ \mathbb{I} \}$			& 2304\\%2304=2^8*3^2
& $\quad \cong \langle 2 \rangle \oplus \langle -2 \rangle\oplus D_4(-1)^{\oplus 3}$ &	&  		& $D_4 + 8 A_1$ 			& $\mathbb{Z}/2\mathbb{Z}$	& 1152\\%1152=2^7*3^2
& 														& 		& 		& $12 A_1$ 				& $(\mathbb{Z}/2\mathbb{Z})^2$& 40 \\%40=2^3*5
\hline
$15$ 	& $H\oplus E_8(-1) \oplus A_1(-1)^{\oplus 5}$ 				& $2^5$ 	& $1$ 	& $E_8 + 5 A_1$ 			& $\{ \mathbb{I} \}$			& 1\\
& $\quad \cong H\oplus E_7(-1) \oplus D_4(-1) \oplus  A_1(-1)^{\oplus 2}$& 		& 		& $E_7 + D_4 + 2 A_1$		& $\{ \mathbb{I} \}$			& 10\\%10=2*5
& $\quad \cong H\oplus D_8(-1) \oplus D_4(-1) \oplus  A_1(-1)$			& 		& 		& $D_8 + D_4 + A_1$		& $\{ \mathbb{I} \}$			& 10\\%10=2*5
& $\quad \cong H\oplus D_6(-1)^{\oplus 2} \oplus  A_1(-1)$			& 		& 		& $2 D_6 + A_1$			& $\{ \mathbb{I} \}$			& 15\\%15=3*5
& 														& 		& 		& $E_7 + 6 A_1$ 			& $\mathbb{Z}/2\mathbb{Z}$	& 1\\
& 														& 		& 		& $D_8+ 5 A_1$ 			& $\mathbb{Z}/2\mathbb{Z}$	& 5\\%5
& 														& 		& 		& $D_6 + D_4 + 3 A_1$ 		& $\mathbb{Z}/2\mathbb{Z}$	& 10\\%10=2*5
\hdashline
$15$ 	& $H\oplus E_7(-1) \oplus A_1(-1)^{\oplus 6}$	 			& $2^7$ 	& $1$ 	& $E_7 + 6 A_1$ 			& $\{ \mathbb{I} \}$			& 72\\%72=2^3*3^2
& $\quad \cong H\oplus D_8(-1) \oplus  A_1(-1)^{\oplus 5}$			& 		& 		& $D_8 + 5 A_1$			& $\{ \mathbb{I} \}$			& 216\\%216=2^3*3^3
& $\quad \cong H\oplus D_6(-1) \oplus D_4(-1) \oplus  A_1(-1)^{\oplus 3}$& 		& 		& $D_6 + D_4 + 3 A_1$		& $\{ \mathbb{I} \}$			& 720\\%720=2^4*3^2*5
& $\quad \cong H\oplus D_4(-1)^{\oplus 3} \oplus  A_1(-1)$			& 		& 		& $3 D_4 + A_1$			& $\{ \mathbb{I} \}$			& 40\\%40=2^3*5
& 														& 		& 		& {\color{blue}$D_6 + 7 A_1$}	& $\mathbb{Z}/2\mathbb{Z}$	& 216\\%216=2^3*3^3
& 														& 		& 		& {\color{blue}$D_6 + 7 A_1$}	& $\mathbb{Z}/2\mathbb{Z}$	& 36\\%36=2^2*3^2
& 														& 		& 		& $2 D_4 + 5 A_1$ 			& $\mathbb{Z}/2\mathbb{Z}$	& 270\\%270=2*3^3*5
& 														& 		& 		& $D_4 + 9 A_1$ 			& $(\mathbb{Z}/2\mathbb{Z})^2$&40\\%40=2^3*5
\hline
$16$ 	& $H\oplus E_8(-1) \oplus D_4(-1) \oplus A_1(-1)^{\oplus 2}$ 	&  $2^4$ & $1$ 	& $E_8 + D_4 + 2 A_1$ 		& $\{ \mathbb{I} \}$			& 1\\
& $\quad \cong H\oplus E_7(-1) \oplus D_6(-1) \oplus A_1(-1)$ 			&		& 		& $E_7 + D_6 + A_1$ 		& $\{ \mathbb{I} \}$			& 6\\%6=2*3
& $\quad \cong H\oplus D_{12}(-1) \oplus A_1(-1)^{\oplus 2}$ 			&		& 		& $D_{12} + 2 A_1$			& $\{ \mathbb{I} \}$			& 3\\%3
& $\quad \cong H\oplus D_{10}(-1) \oplus D_4(-1)$ 					&		& 		& $D_{10} + D_4 $			& $\{ \mathbb{I} \}$			& 1\\
& $\quad \cong H\oplus D_8(-1) \oplus D_6(-1)$ 					&		& 		& $D_8 + D_6$ 			& $\{ \mathbb{I} \}$			& 3\\%3
&														& 		& 		& $E_7 + D_4 + 3 A_1$ 		& $\mathbb{Z}/2\mathbb{Z}$	& 1\\
&														& 		& 		& $D_{10}+ 4 A_1$ 			& $\mathbb{Z}/2\mathbb{Z}$	& 2\\%2
& 														& 		& 		& $D_8 + D_4 + 2 A_1$ 		& $\mathbb{Z}/2\mathbb{Z}$	& 3\\%3
& 														& 		& 		& $2 D_6 + 2 A_1$ 			& $\mathbb{Z}/2\mathbb{Z}$	& 3\\%3
%T=H+H(2)+2 A_1
\hline
\end{tabular}}
\captionsetup{justification=centering}
\caption{Jacobian elliptic K3 surfaces with $\mathrm{Aut}(\mathcal{X})= \infty$}
\label{tab:K3JEF2_a}
\end{table}
\begin{table}[!ht]
\scalemath{0.8}{
\begin{tabular}{c|clc|cc|c}
$\rho$ &  \multicolumn{1}{c}{$L= H \oplus K$}  & $\det{q_L}$ & $\delta$ & $K^{\text{root}}(-1)$ & $\mathpzc{W}$ & $|\mathpzc{A}|/|\mathpzc{B}|$\\
\hline
\hline
$16$ 	& $H\oplus E_8(-1) \oplus A_1(-1)^{\oplus 6}$ 				&  $2^6$ 	& $1$ 	& $E_8 + 6 A_1$ 				& $\{ \mathbb{I} \}$				& 2\\%2
& $\quad \cong H\oplus E_7(-1) \oplus D_4(-1) \oplus A_1(-1)^{\oplus 3}$ &		& 		& $E_7 + D_4 + 3 A_1$ 			& $\{ \mathbb{I} \}$				& 40\\%40=2^3*5
& $\quad \cong H\oplus D_{10}(-1) \oplus A_1(-1)^{\oplus 4}$ 			&		& 		& $D_{10} + 4 A_1$				& $\{ \mathbb{I} \}$				& 30\\%30=2*3*5
& $\quad \cong H\oplus D_8(-1) \oplus D_4(-1) \oplus A_1(-1)^{\oplus 2}$ &		& 		& $D_8 + D_4 + 2 A_1$ 			& $\{ \mathbb{I} \}$				& 60\\%60=2^2*3*5
& $\quad \cong H\oplus D_6(-1)^{\oplus 2} \oplus A_1(-1)^{\oplus 2}$ 	&		& 		& $2 D_6 + 2 A_1$ 				& $\{ \mathbb{I} \}$				& 90\\%90=2*3^2*5
& $\quad \cong H\oplus D_6(-1) \oplus D_4(-1)^{\oplus 2}$ 			&		& 		& $D_6 + 2 D_4$ 				& $\{ \mathbb{I} \}$				& 10\\%10=2*5
&														& 		& 		& $E_7 + 7 A_1$ 				& $\mathbb{Z}/2\mathbb{Z}$		& 6\\%6=2*3
&														&		&		& {\color{blue}$D_8 + 6 A_1$}		& $\mathbb{Z}/2\mathbb{Z}$		& 30\\%30=2*3*5
&														&		&		& {\color{blue}$D_8 + 6 A_1$}		& $\mathbb{Z}/2\mathbb{Z}$		& 1\\
& 														& 		& 		& {\color{blue}$D_6 + D_4 + 4 A_1$}	& $\mathbb{Z}/2\mathbb{Z}$ 		& 120\\%120=2^3*3*5
& 														& 		& 		& {\color{blue}$D_6 + D_4 + 4 A_1$}	& $\mathbb{Z}/2\mathbb{Z}$ 		& 15\\%15=3*5
& 														& 		& 		& $3  D_4 + 2 A_1$ 				& $\mathbb{Z}/2\mathbb{Z}$		& 15\\%15=3*5
& 														& 		& 		& $D_6 + 8 A_1$ 				& $(\mathbb{Z}/2\mathbb{Z})^2$	& 10\\%10=2*5
& (general double sextic)	 									& 		& 		& $2 D_4 + 6 A_1$ 				& $(\mathbb{Z}/2\mathbb{Z})^2$	& 15\\%15=3*5
%T=2A_1(-2)+4A_1
\hline
$17$ 	& $H\oplus E_8(-1) \oplus D_6(-1) \oplus A_1(-1)$ 			&  $2^3$ 	& $1$ 	& $E_8 + D_6+A_1$ 			& $\{ \mathbb{I} \}$				& 1\\
& $\quad \cong H\oplus E_7(-1) \oplus E_7(-1) \oplus A_1(-1)$ 			&		& 		& $2 E_7 + A_1$ 				& $\{ \mathbb{I} \}$				& 1\\
& $\quad \cong H\oplus E_7(-1) \oplus D_8(-1)$ 					&		& 		& $E_7 + D_8$ 				& $\{ \mathbb{I} \}$				& 1\\
& $\quad \cong H\oplus D_{14}(-1) \oplus A_1(-1)$ 					&		& 		& $D_{14} + A_1$ 				& $\{ \mathbb{I} \}$				& 1\\
&														&		&		& $E_7 + D_6 + 2A_1$			& $\mathbb{Z}/2\mathbb{Z}$		& 1\\
& 														& 		& 		& $D_{12} + 3 A_1$ 				& $\mathbb{Z}/2\mathbb{Z}$		& 1\\
& 														& 		& 		& $D_{10} + D_4 + A_1$			& $\mathbb{Z}/2\mathbb{Z}$		& 1\\
& 														& 		& 		& $D_8 + D_6 + A_1$ 			& $\mathbb{Z}/2\mathbb{Z}$		& 2\\%2
\hdashline
$17$ 	& $H\oplus E_8(-1) \oplus D_4(-1) \oplus  A_1(-1)^{\oplus 3}$ 	& $2^5$ 	& $1$ 	& $E_8 + D_4 + 3 A_1$ 			& $\{ \mathbb{I} \}$				& 2\\%2
& $\quad \cong H\oplus E_7(-1) \oplus D_4(-1)^{\oplus 2}$ 			&		& 		& $E_7 + 2 D_4$ 				& $\{ \mathbb{I} \}$				& 1\\
& $\quad \cong H\oplus D_{12}(-1) \oplus A_1(-1)^{\oplus 3}$ 			&		& 		& $D_{12} + 3 A_1$ 				& $\{ \mathbb{I} \}$				& 6\\%6=2*3
& $\quad \cong H\oplus D_{10}(-1) \oplus D_4(-1) \oplus A_1(-1)$ 		&		& 		& $D_{10} + D_4 + A_1$ 			& $\{ \mathbb{I} \}$				& 6\\%6=2*3
& $\quad \cong H\oplus D_8(-1) \oplus D_6(-1) \oplus A_1(-1)$ 			&		& 		& $D_8 + D_6 + A_1$ 			& $\{ \mathbb{I} \}$				& 18\\%18=2*3^2
& 														& 		& 		& $E_7 + D_4 + 4 A_1$ 			& $\mathbb{Z}/2\mathbb{Z}$		& 6\\%6=2*3
& 														& 		& 		& $D_{10} + 5 A_1$ 				& $\mathbb{Z}/2\mathbb{Z}$		& 6\\%6=2*3
& 														& 		& 		& $D_8 + D_4 + 3 A_1$ 			& $\mathbb{Z}/2\mathbb{Z}$		& 18\\%18=2*3^2
& 														& 		& 		& {\color{blue}$2 D_6 + 3 A_1$} 	& $\mathbb{Z}/2\mathbb{Z}$		& 18\\%18=2*3^2
& 														& 		& 		& {\color{blue}$2 D_6 + 3 A_1$} 	& $\mathbb{Z}/2\mathbb{Z}$		& 6\\%6=2*3
& 														& 		& 		& $D_8 + 7 A_1$ 				& $(\mathbb{Z}/2\mathbb{Z})^2$ 	& 1\\
& 														& 		& 		& $D_6 + D_4 + 5 A_1$ 			& $(\mathbb{Z}/2\mathbb{Z})^2$ 	& 9\\%9=3^2
&  \multicolumn{3}{c|}{(twisted Legendre pencil -- 3 parameters)} 						& $3 D_4 + 3 A_1$ 				& $(\mathbb{Z}/2\mathbb{Z})^2$ 	& 2\\%2
\hline
$18$ 	& $H \oplus E_8(-1) \oplus D_8(-1)$ 						& $2^2$ 	& $0$ 	& $E_8 + D_8$ 				& $\{ \mathbb{I} \}$				& 1\\
& $\quad \cong H \oplus D_{16}(-1)$ 							& 		& 		& $D_{16}$					& $\{ \mathbb{I} \}$				& 1\\
& 														&		& 		& $2 E_7 + 2 A_1$				& $\mathbb{Z}/2\mathbb{Z}$		& 1\\
& 														&		& 		& $D_{12}+ D_4$				& $\mathbb{Z}/2\mathbb{Z}$		& 1\\
& 														&		& 		& $2 D_8$					& $\mathbb{Z}/2\mathbb{Z}$		& 2\\%2
%T=H+H(2)
\hdashline
$18$ 	& $H \oplus E_8(-1) \oplus E_7(-1) \oplus A_1(-1)$ 			& $2^2$ 	& $1$ 	& $E_8 + E_7 + A_1$ 			& $\{ \mathbb{I} \}$				& 1 \\
& 														&		& 		& $E_7 + D_8 + A_1$			& $\mathbb{Z}/2\mathbb{Z}$		& 1\\
& 														& 		& 		& $D_{14} + 2 A_1$				& $\mathbb{Z}/2\mathbb{Z}$		& 1\\
& 														&		& 		& $D_{10} + D_6$				& $\mathbb{Z}/2\mathbb{Z}$		& 1\\
\hdashline
$18$ 	& $H \oplus E_8(-1) \oplus D_4(-1)^{\oplus 2}$ 				& $2^4$ 	& $0$ 	& $E_8 + 2 D_4$ 				& $\{ \mathbb{I} \}$				& 1\\
& $\quad \cong H \oplus D_{12}(-1) \oplus D_4(-1)$ 					& 		& 		& $D_{12} + D_4$ 				& $\{ \mathbb{I} \}$				& 6\\%6=2*3
& $\quad \cong H \oplus D_8(-1) \oplus D_8(-1)$ 					& 		& 		& $2 D_8$ 					& $\{ \mathbb{I} \}$				& 9\\%9=3^2
& 														& 		& 		& $D_8 + 2 D_4$ 				& $\mathbb{Z}/2\mathbb{Z}$		& 9\\%9=3^2
& 														& 		& 		& {\color{green}$E_7 + D_6 + 3 A_1$}	& $\mathbb{Z}/2\mathbb{Z}$		& 6\\%6=2*3
& 														& 		& 		& $D_{10} + 6 A_1$ 				& $(\mathbb{Z}/2\mathbb{Z})^2$ 	& 1\\
& 														& 		& 		&  {\color{green}$2 D_6 + 4 A_1$ }	& $(\mathbb{Z}/2\mathbb{Z})^2$ 	& 9\\%9=3^2
&  \multicolumn{3}{c|}{(general Kummer $\mathrm{Kum}(E_1 \times E_2)$)	} 			& $4 D_4$ 					& $(\mathbb{Z}/2\mathbb{Z})^2$ 	& 2\\%2
\hline
\end{tabular}}
\captionsetup{justification=centering}
\caption{Jacobian elliptic K3 surfaces with $\mathrm{Aut}(\mathcal{X})= \infty$ (cont'd)}
\label{tab:K3JEF2_b}
\end{table}
\begin{table}[!ht]
\scalemath{0.85}{
\begin{tabular}{c|clc|cc|c}
$\rho$ &  \multicolumn{1}{c}{$L= H \oplus K$}  & $\det{q_L}$ & $\delta$ & $K^{\text{root}}(-1)$ & $\mathpzc{W}$ & $|\mathpzc{A}|/|\mathpzc{B}|$\\
\hline
\hline
$18$ 	& $H \oplus E_8(-1) \oplus D_6(-1) \oplus A_1(-1)^{\oplus 2}$ 	& $2^4$ 	& $1$ 	& $E_8 + D_6 + 2A_1$ 			& $\{ \mathbb{I} \}$					& 2\\%2
& $\quad \cong H \oplus E_7(-1) \oplus  E_7(-1) \oplus A_1(-1)^{\oplus 2}$& 		& 		& $2 E_7 + 2 A_1$ 				& $\{ \mathbb{I} \}$					& 2\\%2
& $\quad \cong H \oplus E_7(-1) \oplus  D_8(-1) \oplus A_1(-1)$ 		& 		& 		& $E_7 + D_8 + A_1$ 			& $\{ \mathbb{I} \}$					& 4\\%4=2^2
& $\quad \cong H \oplus D_{14}(-1) \oplus A_1(-1)^{\oplus 2}$ 			& 		& 		& $D_{14} + 2 A_1$ 				& $\{ \mathbb{I} \}$					& 2\\%2
& $\quad \cong H \oplus D_{10}(-1) \oplus D_6(-1)$ 					& 		& 		& $D_{10} + D_6$ 				& $\{ \mathbb{I} \}$					& 2\\%2
& 														& 		& 		& {\color{green}$E_7 + D_6 + 3 A_1$}	& $\mathbb{Z}/2\mathbb{Z}$			& 4\\%4=2^2
& 														& 		& 		& $E_7 + 2 D_4 + A_1$ 			& $\mathbb{Z}/2\mathbb{Z}$			& 1\\
& 														& 		& 		& $D_{12} + 4 A_1$ 				& $\mathbb{Z}/2\mathbb{Z}$			& 2\\%2
& 														& 		& 		& $D_{10}+ D_4 + 2 A_1$			& $\mathbb{Z}/2\mathbb{Z}$ 			& 4\\%2^2
& 														& 		& 		&  {\color{blue}$D_8 + D_6 + 2 A_1$} & $\mathbb{Z}/2\mathbb{Z}$			& 8\\%8=2^3
& 														& 		& 		&  {\color{blue}$D_8 + D_6 + 2 A_1$} & $\mathbb{Z}/2\mathbb{Z}$			& 2\\%2
& 														& 		& 		& $2 D_6 + D_4$ 				& $\mathbb{Z}/2\mathbb{Z}$			& 2\\%2
& 														& 		& 		&  {\color{green}$2 D_6 + 4 A_1$ }	& $(\mathbb{Z}/2\mathbb{Z})^2$ 		& 2\\%2
& 														& 		& 		& $D_8 + D_4 + 4 A_1$ 			& $(\mathbb{Z}/2\mathbb{Z})^2$ 		& 1\\
& \multicolumn{3}{c|}{(twisted Legendre pencil -- 2 parameters)} 						& $D_6 + 2 D_4 + 2 A_1$ 			& $(\mathbb{Z}/2\mathbb{Z})^2$ 		& 2\\%2
%T=2 A_1(-1) + 2 A_1
\hline
$19$ & $H \oplus E_8(-1) \oplus E_7(-1) \oplus A_1(-1)^{\oplus 2}$ 		& $2^3$ 	& $1$ 	& $E_8 + E_7 + 2 A_1$ 			& $\{ \mathbb{I} \}$					& 1\\
& $\quad \cong H \oplus E_8(-1) \oplus D_8(-1) \oplus A_1(-1)$ 		& 		& 		& $E_8 + D_8 + A_1$ 			& $\{ \mathbb{I} \}$					& 1\\
& $\quad \cong H \oplus E_7(-1) \oplus D_{10}(-1)$ 					& 		& 		& $E_7 + D_{10}$ 				& $\{ \mathbb{I} \}$					& 1\\
& $\quad \cong H \oplus D_{16}(-1) \oplus A_1(-1)$ 						& 		& 		& $D_{16} + A_1$ 				& $\{ \mathbb{I} \}$					& 1\\
& 														& 		& 		& $2 E_7 + 3 A_1$ 				& $\mathbb{Z}/2\mathbb{Z}$			& 1\\
& 														& 		& 		& $E_7 + D_8 + 2 A_1$ 			& $\mathbb{Z}/2\mathbb{Z}$			& 2\\%2
& 														& 		& 		& $E_7 + D_6 + D_4$ 			& $\mathbb{Z}/2\mathbb{Z}$			& 1\\
& 														& 		& 		& $D_{14}+ 3 A_1$ 				& $\mathbb{Z}/2\mathbb{Z}$			& 1\\
& 														& 		& 		& $D_{12}+ D_4 + A_1$ 			& $\mathbb{Z}/2\mathbb{Z}$			& 1\\
& 														& 		& 		& {\color{blue}$D_{10} + D_6 + A_1$} & $\mathbb{Z}/2\mathbb{Z}$			& 2\\%2
& 														& 		& 		& {\color{blue}$D_{10} + D_6 + A_1$} & $\mathbb{Z}/2\mathbb{Z}$			& 1\\
& 														& 		& 		& $2 D_8 + A_1$ 				& $\mathbb{Z}/2\mathbb{Z}$			& 2\\%2
& 														& 		& 		& $D_8 + D_6 + 3 A_1$ 			& $(\mathbb{Z}/2\mathbb{Z})^2$ 		& 1\\
& \multicolumn{3}{c|}{(twisted Legendre pencil -- 1 parameter)} 							& $2 D_6 + D_4 + A_1$ 			& $(\mathbb{Z}/2\mathbb{Z})^2$ 		& 1\\
\hline
$20$ 	& $H \oplus E_8(-1) \oplus E_8(-1) \oplus A_1(-1)^{\oplus 2}$ 	& $2^2$ 	& $1$ 	& $2 E_8 + 2 A_1$ 				& $\{ \mathbb{I} \}$					& 1\\
& $\quad \cong H \oplus E_8(-1) \oplus D_{10}(-1)$ 					& 		& 		& $E_8 + D_{10}$ 				& $\{ \mathbb{I} \}$					& 1\\
& $\quad \cong H \oplus D_{18}(-1)$ 							& 		& 		& $D_{18}$ 					& $\{ \mathbb{I} \}$					& 1\\
& 														& 		& 		& $2 E_7 + D_4$ 				& $\mathbb{Z}/2\mathbb{Z}$			& 1\\
& 														& 		& 		& $E_7 + D_{10} + A_1$ 			& $\mathbb{Z}/2\mathbb{Z}$			& 2\\%2
& 														& 		& 		& $D_{16} + 2 A_1$ 				& $\mathbb{Z}/2\mathbb{Z}$			& 1\\
& 														& 		& 		& $D_{12} + D_6$ 				& $\mathbb{Z}/2\mathbb{Z}$			& 1\\
& 														& 		& 		& $2 D_8 + 2 A_1$ 				& $(\mathbb{Z}/2\mathbb{Z})^2$ 		& 1\\
& 														& 		& 		& $3 D_6$ 					& $(\mathbb{Z}/2\mathbb{Z})^2$ 		& 1\\
%T=2 A_1(-1)
\hline
\end{tabular}}
\captionsetup{justification=centering}
\caption{Jacobian elliptic K3 surfaces with $\mathrm{Aut}(\mathcal{X})= \infty$ (cont'd)}
\label{tab:K3JEF2_c}
\end{table}
\begin{table}[!ht]
\scalemath{0.8}{
\begin{tabular}{c|l|l}
$(\rho, \ell, \delta)$ & \multicolumn{1}{c|}{$K^{\text{root}}(-1)$}  &  \multicolumn{1}{c}{$\vec{v}_{\text{max}}$}  \\
 \hline
 $(10,6,0)$ & $8 A_1$ 		& $\left[{\frac{1}{2}}, {\frac{1}{2}}, {\frac{1}{2}}, {\frac{1}{2}}, {\frac{1}{2}}, {\frac{1}{2}}, {\frac{1}{2}}, {\frac{1}{2}}\right]$ \\[0.2em]
 $(11,7,1)$ &  $9 A_1$		& $\left[{\frac{1}{2}}, {\frac{1}{2}}, {\frac{1}{2}}, {\frac{1}{2}}, {\frac{1}{2}}, {\frac{1}{2}}, {\frac{1}{2}}, {\frac{1}{2}}, 0\right]$ \\[0.2em]
 $(12,6,1)$ & $6 A_1+ D_4$	& $\left[{\frac{1}{2}}, {\frac{1}{2}}, {\frac{1}{2}}, {\frac{1}{2}}, {\frac{1}{2}}, {\frac{1}{2}}, {\frac{1}{2}}, 0, {\frac{1}{2}}, 0\right]$ \\[0.2em]
 $(13,5,1)$ & $5 A_1+ D_6$	& $\left[{\frac{1}{2}}, {\frac{1}{2}}, {\frac{1}{2}}, {\frac{1}{2}}, {\frac{1}{2}}, {\frac{1}{2}}, 0, {\frac{1}{2}}, 0, 0, {\frac{1}{2}}\right]$ \\[0.2em]
 $(14,4,0)$ & $5 A_1+ E_7$	& $\left[{\frac{1}{2}}, {\frac{1}{2}}, {\frac{1}{2}}, {\frac{1}{2}}, {\frac{1}{2}}, 0, {\frac{1}{2}}, 0, 0, {\frac{1}{2}}, 0, {\frac{1}{2}}\right]$ \\[0.2em]
 $(14,4,1)$ & $4 A_1+ D_8$	& $\left[{\frac{1}{2}}, {\frac{1}{2}}, {\frac{1}{2}}, {\frac{1}{2}}, {\frac{1}{2}}, 0, {\frac{1}{2}}, 0, {\frac{1}{2}}, 0, {\frac{1}{2}}, 0\right]$ \\[0.2em]
 $(15,3,1)$ & $3 A_1+ D_{10}$	& $\left[{\frac{1}{2}}, {\frac{1}{2}}, {\frac{1}{2}}, {\frac{1}{2}}, 0, {\frac{1}{2}}, 0, {\frac{1}{2}}, 0, {\frac{1}{2}}, 0, 0, {\frac{1}{2}}\right] $ \\[0.2em]
 $(16,2,1)$ & $2 A_1+ D_{12}$	& $\left[{\frac{1}{2}}, {\frac{1}{2}}, {\frac{1}{2}}, 0, {\frac{1}{2}}, 0, {\frac{1}{2}}, 0, {\frac{1}{2}}, 0, {\frac{1}{2}}, 0, {\frac{1}{2}}, 0\right] $ \\[0.2em]
 $(17,1,0)$ & $A_1 + D_{14}$ 	& $\left[{\frac{1}{2}}, {\frac{1}{2}}, 0, {\frac{1}{2}}, 0, {\frac{1}{2}}, 0, {\frac{1}{2}}, 0, {\frac{1}{2}}, 0, {\frac{1}{2}}, 0, 0, {\frac{1}{2}}\right] $ \\[0.2em]
 $(18,0,0)$ & $D_{16}$		& $\left[{\frac{1}{2}}, 0, {\frac{1}{2}}, 0, {\frac{1}{2}}, 0, {\frac{1}{2}}, 0, {\frac{1}{2}}, 0, {\frac{1}{2}}, 0, {\frac{1}{2}}, 0, {\frac{1}{2}}, 0\right] $ \\[0.2em]
 $(19,1,1)$ & $A_1 + D_{16}$ 	& $\left[0, {\frac{1}{2}}, 0, {\frac{1}{2}}, 0, {\frac{1}{2}}, 0, {\frac{1}{2}}, 0, {\frac{1}{2}}, 0, {\frac{1}{2}}, 0, {\frac{1}{2}}, 0, {\frac{1}{2}}, 0\right] $  \\[0.2em]
 $(19,1,1)$ & $D_{10} + E_7$ 	& $\left[{\frac{1}{2}}, 0, {\frac{1}{2}}, 0, {\frac{1}{2}}, 0, {\frac{1}{2}}, 0, 0, {\frac{1}{2}}, 0, {\frac{1}{2}}, 0, 0, {\frac{1}{2}}, 0, {\frac{1}{2}}\right] $ \\[0.2em]
 \hline
\end{tabular}}
\captionsetup{justification=centering}
\caption{Vector generating a $\mathbb{Z}_2$-overlattice of $K^{\text{root}}$}
\label{tab:max_vects_Z2_finite}
\end{table}
\begin{table}[!ht]
\scalemath{0.8}{
\begin{tabular}{c|l|l}
$(\rho, \ell, \delta)$ & \multicolumn{1}{c|}{$K^{\text{root}}(-1)$}  &  \multicolumn{1}{c}{$\vec{v}_{\text{max}}$}  \\
 \hline
 $(12,8,1)$ & $10 A_1$ 				& $\left[{\frac{1}{2}}, {\frac{1}{2}}, {\frac{1}{2}}, {\frac{1}{2}}, {\frac{1}{2}}, {\frac{1}{2}}, {\frac{1}{2}}, {\frac{1}{2}}, 0, 0\right]$ \\[0.2em]
 $(13,7,1)$ & $7 A_1+D_4$ 			& $\left[{\frac{1}{2}}, {\frac{1}{2}}, {\frac{1}{2}}, {\frac{1}{2}}, {\frac{1}{2}}, {\frac{1}{2}}, 0, {\frac{1}{2}}, 0, {\frac{1}{2}}, 0\right]$ \\[0.2em]
 $(13,9,1)$ & $11 A_1$ 				& $\left[{\frac{1}{2}}, {\frac{1}{2}}, {\frac{1}{2}}, {\frac{1}{2}}, {\frac{1}{2}}, {\frac{1}{2}}, {\frac{1}{2}}, {\frac{1}{2}}, 0, 0, 0\right]$ \\[0.2em]
 $(14,6,0)$ & {\color{green}$6 A_1 + D_6$} & $\left[{\frac{1}{2}}, {\frac{1}{2}}, {\frac{1}{2}}, {\frac{1}{2}}, {\frac{1}{2}}, {\frac{1}{2}}, 0, 0, 0, 0, {\frac{1}{2}}, {\frac{1}{2}}\right]$ \\[0.2em]
 $(14,6,1)$ & {\color{green}$6 A_1 + D_6$} & $\left[{\frac{1}{2}}, {\frac{1}{2}}, {\frac{1}{2}}, {\frac{1}{2}}, {\frac{1}{2}}, 0, {\frac{1}{2}}, 0, {\frac{1}{2}}, 0, 0, {\frac{1}{2}}\right]$ \\[0.2em]
 $(14,6,1)$ &$4 A_1 + 2 D_4$			& $\left[{\frac{1}{2}}, {\frac{1}{2}}, {\frac{1}{2}}, {\frac{1}{2}}, {\frac{1}{2}}, 0, {\frac{1}{2}}, 0, {\frac{1}{2}}, 0, {\frac{1}{2}}, 0\right]$ \\[0.2em]
 $(14,8,1)$ &$8 A_1 + D_4$			& $\left[{\frac{1}{2}}, {\frac{1}{2}}, {\frac{1}{2}}, {\frac{1}{2}}, {\frac{1}{2}}, {\frac{1}{2}}, 0, 0, {\frac{1}{2}}, 0, {\frac{1}{2}}, 0\right]$ \\[0.2em]
 $(15,5,1)$ &$6 A_1 + E_7$			& $\left[{\frac{1}{2}}, {\frac{1}{2}}, {\frac{1}{2}}, {\frac{1}{2}}, {\frac{1}{2}}, 0, 0, {\frac{1}{2}}, 0, 0, {\frac{1}{2}}, 0, {\frac{1}{2}}\right]$ \\[0.2em]
 $(15,5,1)$ &$5 A_1 + D_8$			& $\left[{\frac{1}{2}}, {\frac{1}{2}}, {\frac{1}{2}}, {\frac{1}{2}}, 0, {\frac{1}{2}}, 0, {\frac{1}{2}}, 0, {\frac{1}{2}}, 0, {\frac{1}{2}}, 0\right]$ \\[0.2em]
 $(15,5,1)$ &$3 A_1 + D_4 + D_6$		& $\left[{\frac{1}{2}}, {\frac{1}{2}}, {\frac{1}{2}}, {\frac{1}{2}}, 0, {\frac{1}{2}}, 0, {\frac{1}{2}}, 0, {\frac{1}{2}}, 0, 0, {\frac{1}{2}}\right]$ \\[0.2em]
 $(15,7,1)$ &{\color{blue}$7 A_1 + D_6$}	& $\left[{\frac{1}{2}}, {\frac{1}{2}}, {\frac{1}{2}}, {\frac{1}{2}}, {\frac{1}{2}}, 0, 0, {\frac{1}{2}}, 0, {\frac{1}{2}}, 0, 0, {\frac{1}{2}}\right]$ \\[0.2em]
 $(15,7,1)$ &{\color{blue}$7 A_1 + D_6$}	& $\left[{\frac{1}{2}}, {\frac{1}{2}}, {\frac{1}{2}}, {\frac{1}{2}}, {\frac{1}{2}}, {\frac{1}{2}}, 0, 0, 0, 0, 0, {\frac{1}{2}}, {\frac{1}{2}}\right]$ \\[0.2em]
 $(15,7,1)$ &$5 A_1 + 2 D_4$			& $\left[{\frac{1}{2}}, {\frac{1}{2}}, {\frac{1}{2}}, {\frac{1}{2}}, 0, {\frac{1}{2}}, 0, {\frac{1}{2}}, 0, {\frac{1}{2}}, 0, {\frac{1}{2}}, 0\right]$ \\[0.2em]
 $(16,4,1)$ &$3 A_1 + D_4 + E_7$		& $\left[{\frac{1}{2}}, {\frac{1}{2}}, {\frac{1}{2}}, {\frac{1}{2}}, 0, {\frac{1}{2}}, 0, 0, {\frac{1}{2}}, 0, 0, {\frac{1}{2}}, 0, {\frac{1}{2}}\right]$ \\[0.2em]	
 $(16,4,1)$ &$4 A_1 + D_{10}$			& $\left[{\frac{1}{2}}, {\frac{1}{2}}, {\frac{1}{2}}, 0, {\frac{1}{2}}, 0, {\frac{1}{2}}, 0, {\frac{1}{2}}, 0, {\frac{1}{2}}, 0, 0, {\frac{1}{2}}\right]$ \\[0.2em]
 $(16,4,1)$ &$2 A_1 + D_4 +D_8$		& $\left[{\frac{1}{2}}, {\frac{1}{2}}, {\frac{1}{2}}, 0, {\frac{1}{2}}, 0, {\frac{1}{2}}, 0, {\frac{1}{2}}, 0, {\frac{1}{2}}, 0, {\frac{1}{2}}, 0\right]$ \\[0.2em]
 $(16,4,1)$ &$2 A_1 + 2 D_6$			& $\left[{\frac{1}{2}}, {\frac{1}{2}}, {\frac{1}{2}}, 0, {\frac{1}{2}}, 0, 0, {\frac{1}{2}}, {\frac{1}{2}}, 0, {\frac{1}{2}}, 0, 0, {\frac{1}{2}}\right]$ \\[0.2em]
 $(16,6,1)$ &$7 A_1 + E_7$			& $\left[{\frac{1}{2}}, {\frac{1}{2}}, {\frac{1}{2}}, {\frac{1}{2}}, {\frac{1}{2}}, 0, 0, 0, {\frac{1}{2}}, 0, 0, {\frac{1}{2}}, 0, {\frac{1}{2}}\right]$ \\[0.2em]
 $(16,6,1)$ &{\color{blue}$6 A_1 + D_8$}	& $\left[{\frac{1}{2}}, {\frac{1}{2}}, {\frac{1}{2}}, {\frac{1}{2}}, 0, 0, {\frac{1}{2}}, 0, {\frac{1}{2}}, 0, {\frac{1}{2}}, 0, {\frac{1}{2}}, 0\right]$ \\[0.2em]
 $(16,6,1)$ &{\color{blue}$6 A_1 + D_8$}	& $\left[{\frac{1}{2}}, {\frac{1}{2}}, {\frac{1}{2}}, {\frac{1}{2}}, {\frac{1}{2}}, {\frac{1}{2}}, 0, 0, 0, 0, 0, 0, {\frac{1}{2}}, {\frac{1}{2}}\right]$ \\[0.2em]
 $(16,6,1)$ &{\color{blue}$4 A_1 + D_4 + D_6$}& $\left[{\frac{1}{2}}, {\frac{1}{2}}, {\frac{1}{2}}, 0, {\frac{1}{2}}, 0, {\frac{1}{2}}, 0, {\frac{1}{2}}, 0, {\frac{1}{2}}, 0, 0, {\frac{1}{2}}\right]$ \\[0.2em]
 $(16,6,1)$ &{\color{blue}$4 A_1 + D_4 + D_6$}& $\left[{\frac{1}{2}}, {\frac{1}{2}}, {\frac{1}{2}}, {\frac{1}{2}}, {\frac{1}{2}}, 0, {\frac{1}{2}}, 0, 0, 0, 0, 0, {\frac{1}{2}}, {\frac{1}{2}}\right]$ \\[0.2em]
 $(16,6,1)$ &$2 A_1 + 3 D_4 $			& $\left[{\frac{1}{2}}, {\frac{1}{2}}, {\frac{1}{2}}, 0, {\frac{1}{2}}, 0, {\frac{1}{2}}, 0, {\frac{1}{2}}, 0, {\frac{1}{2}}, 0, {\frac{1}{2}}, 0\right]$ \\[0.2em]
 \hline
  \end{tabular}}
\captionsetup{justification=centering}
\caption{Vector generating a $\mathbb{Z}_2$-overlattice  of $K^{\text{root}}$}
\label{tab:max_vects_Z2_infinite_a}
\end{table}
\begin{table}[!ht]
\scalemath{0.8}{
\begin{tabular}{c|l|l}
$(\rho, \ell, \delta)$ & \multicolumn{1}{c|}{$K^{\text{root}}(-1)$}  &  \multicolumn{1}{c}{$\vec{v}_{\text{max}}$}  \\
 \hline
  $(17,3,1)$ &$2 A_1 + D_{6} + E_7$		& $\left[{\frac{1}{2}}, {\frac{1}{2}}, {\frac{1}{2}}, 0, {\frac{1}{2}}, 0, 0, {\frac{1}{2}}, 0, {\frac{1}{2}}, 0, 0, {\frac{1}{2}}, 0,  {\frac{1}{2}}\right]$ \\[0.2em]
 $(17,3,1)$ &$3 A_1 + D_{12}$			& $\left[{\frac{1}{2}}, {\frac{1}{2}}, 0, {\frac{1}{2}}, 0, {\frac{1}{2}}, 0, {\frac{1}{2}}, 0, {\frac{1}{2}}, 0, {\frac{1}{2}}, 0, {\frac{1}{2}}, 0\right]$ \\[0.2em]
 $(17,3,1)$ &$A_1 + D_4 +D_{10}$		& $\left[{\frac{1}{2}}, {\frac{1}{2}}, 0, {\frac{1}{2}}, 0, {\frac{1}{2}}, 0, {\frac{1}{2}}, 0, {\frac{1}{2}}, 0, {\frac{1}{2}}, 0, 0, {\frac{1}{2}}\right]$ \\[0.2em]
 $(17,3,1)$ &$ A_1 + D_6 +D_8$		& $\left[{\frac{1}{2}}, {\frac{1}{2}}, 0, {\frac{1}{2}}, 0, 0, {\frac{1}{2}}, {\frac{1}{2}}, 0, {\frac{1}{2}}, 0, {\frac{1}{2}}, 0, {\frac{1}{2}}, 0\right]$ \\[0.2em]
 $(17,5,1)$ &$4A_1 + D_4 +E_7$		& $\left[{\frac{1}{2}}, {\frac{1}{2}}, {\frac{1}{2}}, 0, {\frac{1}{2}}, 0, {\frac{1}{2}}, 0, 0, {\frac{1}{2}}, 0, 0, {\frac{1}{2}}, 0, {\frac{1}{2}}\right]$ \\[0.2em]
 $(17,5,1)$ &$5A_1 + D_{10}$			& $\left[{\frac{1}{2}}, {\frac{1}{2}}, {\frac{1}{2}}, 0, 0, {\frac{1}{2}}, 0, {\frac{1}{2}}, 0, {\frac{1}{2}}, 0, {\frac{1}{2}}, 0, 0, {\frac{1}{2}}\right]$ \\[0.2em]
 $(17,5,1)$ &$3 A_1 + D_4 + D_8$		& $\left[{\frac{1}{2}}, {\frac{1}{2}}, 0, {\frac{1}{2}}, 0, {\frac{1}{2}}, 0, {\frac{1}{2}}, 0, {\frac{1}{2}}, 0, {\frac{1}{2}}, 0, {\frac{1}{2}}, 0\right]$ \\[0.2em]
 $(17,5,1)$ &{\color{blue}$3 A_1 + 2 D_6$	}&$\left[{\frac{1}{2}}, {\frac{1}{2}}, 0, {\frac{1}{2}}, 0, {\frac{1}{2}}, 0, 0, {\frac{1}{2}}, {\frac{1}{2}}, 0, {\frac{1}{2}}, 0, 0, {\frac{1}{2}}\right]$ \\[0.2em]
 $(17,5,1)$ &{\color{blue}$3 A_1 + 2 D_6$	}&$\left[{\frac{1}{2}}, {\frac{1}{2}}, {\frac{1}{2}}, {\frac{1}{2}}, 0, {\frac{1}{2}}, 0, 0, {\frac{1}{2}}, 0, 0, 0, 0, {\frac{1}{2}}, {\frac{1}{2}}\right]$ \\[0.2em]
 $(18,2,0)$ &$2 A_1 + 2 E_7$			& $\left[{\frac{1}{2}}, {\frac{1}{2}}, 0, {\frac{1}{2}}, 0, 0, {\frac{1}{2}}, 0, {\frac{1}{2}}, 0, {\frac{1}{2}}, 0, 0, {\frac{1}{2}}, 0, {\frac{1}{2}}\right]$ \\[0.2em]
 $(18,2,0)$ &$D_4 + D_{12}$			& $\left[{\frac{1}{2}}, 0, {\frac{1}{2}}, 0, {\frac{1}{2}}, 0, {\frac{1}{2}}, 0, {\frac{1}{2}}, 0, {\frac{1}{2}}, 0, {\frac{1}{2}}, 0, {\frac{1}{2}}, 0\right]$ \\[0.2em]
 $(18,2,0)$ &$2 D_8$				& $\left[{\frac{1}{2}}, 0, {\frac{1}{2}}, 0, {\frac{1}{2}}, 0, {\frac{1}{2}}, 0, {\frac{1}{2}}, 0, {\frac{1}{2}}, 0, {\frac{1}{2}}, 0, {\frac{1}{2}}, 0\right]$ \\[0.2em]
 $(18,2,1)$ &$A_1 + D_8 +E_7$		& $\left[{\frac{1}{2}}, {\frac{1}{2}}, 0, {\frac{1}{2}}, 0, {\frac{1}{2}}, 0, {\frac{1}{2}}, 0, 0, {\frac{1}{2}}, 0, 0, {\frac{1}{2}}, 0, {\frac{1}{2}}\right]$ \\[0.2em]
 $(18,2,1)$ &$2A_1 + D_{14}$			& $\left[{\frac{1}{2}}, 0, {\frac{1}{2}}, 0, {\frac{1}{2}}, 0, {\frac{1}{2}}, 0, {\frac{1}{2}}, 0, {\frac{1}{2}}, 0, {\frac{1}{2}}, 0, 0, {\frac{1}{2}}\right]$ \\[0.2em]
 $(18,2,1)$ &$D_6 + D_{10}$			& $\left[{\frac{1}{2}}, 0, {\frac{1}{2}}, 0, 0, {\frac{1}{2}}, {\frac{1}{2}}, 0, {\frac{1}{2}}, 0, {\frac{1}{2}}, 0, {\frac{1}{2}}, 0, 0, {\frac{1}{2}}\right]$ \\[0.2em]
 $(18,4,0)$ &$2 D_4 + D_8$			& $\left[{\frac{1}{2}}, 0, {\frac{1}{2}}, 0, {\frac{1}{2}}, 0, {\frac{1}{2}}, 0, {\frac{1}{2}}, 0, {\frac{1}{2}}, 0, {\frac{1}{2}}, 0, {\frac{1}{2}}, 0\right]$ \\[0.2em]
 $(18,4,0)$ &{\color{green}$3 A_1 + D_6 +E_7$}&$\left[{\frac{1}{2}}, {\frac{1}{2}}, {\frac{1}{2}}, 0, 0, 0, 0, {\frac{1}{2}}, {\frac{1}{2}}, 0, {\frac{1}{2}}, 0, 0, {\frac{1}{2}}, 0, {\frac{1}{2}}\right]$ \\[0.2em]
 $(18,4,1)$ &{\color{green}$3 A_1 + D_6 +E_7$}&$\left[{\frac{1}{2}}, {\frac{1}{2}}, 0, {\frac{1}{2}}, 0, {\frac{1}{2}}, 0, 0, {\frac{1}{2}}, 0, {\frac{1}{2}}, 0, 0, {\frac{1}{2}}, 0, {\frac{1}{2}}\right]$ \\[0.2em]
 $(18,4,1)$ &$A_1 + 2 D_4 +E_7$		&$\left[{\frac{1}{2}}, {\frac{1}{2}}, 0, {\frac{1}{2}}, 0, {\frac{1}{2}}, 0, {\frac{1}{2}}, 0, 0, {\frac{1}{2}}, 0, 0, {\frac{1}{2}}, 0, {\frac{1}{2}}\right]$ \\[0.2em]
 $(18,4,1)$ &$4 A_1 + D_{12}$			&$\left[{\frac{1}{2}}, {\frac{1}{2}}, 0, 0, {\frac{1}{2}}, 0, {\frac{1}{2}}, 0, {\frac{1}{2}}, 0, {\frac{1}{2}}, 0, {\frac{1}{2}}, 0, {\frac{1}{2}}, 0\right]$ \\[0.2em]
 $(18,4,1)$ &$2 A_1 + D_4 + D_{10}$	&$\left[{\frac{1}{2}}, 0, {\frac{1}{2}}, 0, {\frac{1}{2}}, 0, {\frac{1}{2}}, 0, {\frac{1}{2}}, 0, {\frac{1}{2}}, 0, {\frac{1}{2}}, 0, 0, {\frac{1}{2}}\right]$ \\[0.2em]
 $(18,4,1)$ &{\color{blue}$2 A_1 + D_6 + D_8$}&$\left[{\frac{1}{2}}, 0, {\frac{1}{2}}, 0, {\frac{1}{2}}, 0, 0, {\frac{1}{2}}, {\frac{1}{2}}, 0, {\frac{1}{2}}, 0, {\frac{1}{2}}, 0, {\frac{1}{2}}, 0\right]$ \\[0.2em]
 $(18,4,1)$ &{\color{blue}$2 A_1 + D_6 + D_8$}&$\left[{\frac{1}{2}}, {\frac{1}{2}}, 0, 0, 0, 0, {\frac{1}{2}}, {\frac{1}{2}}, {\frac{1}{2}}, 0, {\frac{1}{2}}, 0, {\frac{1}{2}}, 0, {\frac{1}{2}}, 0\right]$ \\[0.2em]
 $(18,4,1)$ &$D_4+2D_6$			&$\left[{\frac{1}{2}}, 0, {\frac{1}{2}}, 0, {\frac{1}{2}}, 0, {\frac{1}{2}}, 0, 0, {\frac{1}{2}}, {\frac{1}{2}}, 0, {\frac{1}{2}}, 0, 0, {\frac{1}{2}}\right]$ \\[0.2em]
 $(19,3,1)$ &$3 A_1 + 2 E_7$			&$\left[{\frac{1}{2}}, {\frac{1}{2}}, 0, 0, {\frac{1}{2}}, 0, 0, {\frac{1}{2}}, 0, {\frac{1}{2}}, 0, {\frac{1}{2}}, 0, 0, {\frac{1}{2}}, 0, {\frac{1}{2}}\right]$ \\[0.2em]
 $(19,3,1)$ &$2 A_1 + D_8 + E_7$		&$\left[{\frac{1}{2}}, 0, {\frac{1}{2}}, 0, {\frac{1}{2}}, 0, {\frac{1}{2}}, 0, {\frac{1}{2}}, 0, 0, {\frac{1}{2}}, 0, 0, {\frac{1}{2}}, 0, {\frac{1}{2}}\right]$ \\[0.2em]
 $(19,3,1)$ &$D_4 + D_6 +E_7$		&$\left[{\frac{1}{2}}, 0, {\frac{1}{2}}, 0, {\frac{1}{2}}, 0, {\frac{1}{2}}, 0, 0, {\frac{1}{2}}, 0, {\frac{1}{2}}, 0, 0, {\frac{1}{2}}, 0, {\frac{1}{2}}\right]$ \\[0.2em]
 $(19,3,1)$ &$A_1 + D_4 + D_{12}$		&$\left[0, {\frac{1}{2}}, 0, {\frac{1}{2}}, 0, {\frac{1}{2}}, 0, {\frac{1}{2}}, 0, {\frac{1}{2}}, 0, {\frac{1}{2}}, 0, {\frac{1}{2}}, 0, {\frac{1}{2}}, 0\right]$ \\[0.2em]
 $(19,3,1)$ &{\color{blue}$A_1 + D_6 + D_{10}$}&$\left[0, {\frac{1}{2}}, 0, {\frac{1}{2}}, 0, 0, {\frac{1}{2}}, {\frac{1}{2}}, 0, {\frac{1}{2}}, 0, {\frac{1}{2}}, 0, {\frac{1}{2}}, 0, 0, {\frac{1}{2}}\right]$ \\[0.2em]
 $(19,3,1)$ &{\color{blue}$A_1 + D_6 + D_{10}$}&$\left[{\frac{1}{2}}, 0, 0, 0, 0, {\frac{1}{2}}, {\frac{1}{2}}, {\frac{1}{2}}, 0, {\frac{1}{2}}, 0, {\frac{1}{2}}, 0, {\frac{1}{2}}, 0, 0, {\frac{1}{2}}\right]$ \\[0.2em]
 $(19,3,1)$ &$A_1 + 2 D_8$			&$\left[0, {\frac{1}{2}}, 0, {\frac{1}{2}}, 0, {\frac{1}{2}}, 0, {\frac{1}{2}}, 0, {\frac{1}{2}}, 0, {\frac{1}{2}}, 0, {\frac{1}{2}}, 0, {\frac{1}{2}}, 0\right]$ \\[0.2em]
 $(20,2,1)$ &$D_4 + 2 E_7$			&$\left[{\frac{1}{2}}, 0, {\frac{1}{2}}, 0, 0, {\frac{1}{2}}, 0, 0, {\frac{1}{2}}, 0, {\frac{1}{2}}, 0, {\frac{1}{2}}, 0, 0, {\frac{1}{2}}, 0, {\frac{1}{2}}\right]$ \\[0.2em]
 $(20,2,1)$ &$A_1 + D_{10} + E_7$		&$\left[0, {\frac{1}{2}}, 0, {\frac{1}{2}}, 0, {\frac{1}{2}}, 0, {\frac{1}{2}}, 0, 0, {\frac{1}{2}}, 0, {\frac{1}{2}}, 0, 0, {\frac{1}{2}}, 0, {\frac{1}{2}}\right]$ \\[0.2em]
 $(20,2,1)$ &$2A_1 + D_{16}$			&$\left[0, 0, {\frac{1}{2}}, 0, {\frac{1}{2}}, 0, {\frac{1}{2}}, 0, {\frac{1}{2}}, 0, {\frac{1}{2}}, 0, {\frac{1}{2}}, 0, {\frac{1}{2}}, 0, {\frac{1}{2}}, 0\right]$ \\[0.2em]
 $(20,2,1)$ &$D_6 + D_{12}$			&$\left[0, 0, 0, 0, {\frac{1}{2}}, {\frac{1}{2}}, {\frac{1}{2}}, 0, {\frac{1}{2}}, 0, {\frac{1}{2}}, 0, {\frac{1}{2}}, 0, {\frac{1}{2}}, 0, {\frac{1}{2}}, 0\right]$ \\[0.2em]
\hline 
\end{tabular}}
\captionsetup{justification=centering}
\caption{Vector generating a $\mathbb{Z}_2$-overlattice of $K^{\text{root}}$}
\label{tab:max_vects_Z2_infinite_b}
\end{table} 
\begin{table}[!ht]
\scalemath{0.75}{
\begin{tabular}{c|l|l}
$(\rho, \ell, \delta)$ & \multicolumn{1}{c|}{$K^{\text{root}}(-1)$}  &  \multicolumn{1}{c}{$\vec{v}_{\text{max}, 1}, \vec{v}_{\text{max}, 2}$}  \\
 \hline
 $(14,8,1)$ &$12 A_1$			& $\left[{\frac{1}{2}}, {\frac{1}{2}}, {\frac{1}{2}}, {\frac{1}{2}}, 0, 0, 0, 0, {\frac{1}{2}}, {\frac{1}{2}}, {\frac{1}{2}}, {\frac{1}{2}}\right], 
 \left[0, 0, 0, 0, {\frac{1}{2}}, {\frac{1}{2}}, {\frac{1}{2}}, {\frac{1}{2}}, {\frac{1}{2}}, {\frac{1}{2}}, {\frac{1}{2}}, {\frac{1}{2}}\right]$ \\[0.2em]
 $(15,7,1)$ &$9 A_1+ D_4$		& $\left[{\frac{1}{2}}, {\frac{1}{2}}, {\frac{1}{2}}, 0, 0, 0, {\frac{1}{2}}, {\frac{1}{2}}, {\frac{1}{2}}, {\frac{1}{2}}, 0, {\frac{1}{2}}, 0\right],
\left[0, 0, 0, {\frac{1}{2}}, {\frac{1}{2}}, {\frac{1}{2}}, {\frac{1}{2}}, {\frac{1}{2}}, {\frac{1}{2}}, {\frac{1}{2}}, 0, 0, {\frac{1}{2}}\right]$ \\[0.2em]
$(16,6,1)$ &$8 A_1+ D_6$		& $\left[{\frac{1}{2}}, {\frac{1}{2}}, {\frac{1}{2}}, 0, 0, 0, {\frac{1}{2}}, {\frac{1}{2}}, {\frac{1}{2}}, 0, {\frac{1}{2}}, 0, 0, {\frac{1}{2}}\right],
\left[0, 0, 0, {\frac{1}{2}}, {\frac{1}{2}}, {\frac{1}{2}}, {\frac{1}{2}}, {\frac{1}{2}}, {\frac{1}{2}}, 0, {\frac{1}{2}}, 0, {\frac{1}{2}}, 0\right]$ \\[0.2em]
$(16,6,1)$ &$6 A_1+ 2 D_4$		& $\left[{\frac{1}{2}}, {\frac{1}{2}}, 0, 0, {\frac{1}{2}}, {\frac{1}{2}}, {\frac{1}{2}}, 0, {\frac{1}{2}}, 0, {\frac{1}{2}}, 0, {\frac{1}{2}}, 0\right],
\left[0, 0, {\frac{1}{2}}, {\frac{1}{2}}, {\frac{1}{2}}, {\frac{1}{2}}, {\frac{1}{2}}, 0, 0, {\frac{1}{2}}, {\frac{1}{2}}, 0, 0, {\frac{1}{2}}\right]$  \\[0.2em]
$(17,5,1)$ &$7 A_1+ D_8$		& $\left[{\frac{1}{2}}, {\frac{1}{2}}, {\frac{1}{2}}, 0, 0, 0, {\frac{1}{2}}, {\frac{1}{2}}, 0, {\frac{1}{2}}, 0, {\frac{1}{2}}, 0, {\frac{1}{2}}, 0\right],
\left[0, 0, 0, {\frac{1}{2}}, {\frac{1}{2}}, {\frac{1}{2}}, {\frac{1}{2}}, {\frac{1}{2}}, 0, {\frac{1}{2}}, 0, {\frac{1}{2}}, 0, 0, {\frac{1}{2}}\right]$  \\[0.2em]
$(17,5,1)$ &$5 A_1+ D_4 + D_6$	& $\left[{\frac{1}{2}}, {\frac{1}{2}}, 0, 0, {\frac{1}{2}}, {\frac{1}{2}}, 0, {\frac{1}{2}}, 0, {\frac{1}{2}}, 0, {\frac{1}{2}}, 0, 0, {\frac{1}{2}}\right],
\left[0, 0, {\frac{1}{2}}, {\frac{1}{2}}, {\frac{1}{2}}, {\frac{1}{2}}, 0, 0, {\frac{1}{2}}, {\frac{1}{2}}, 0, {\frac{1}{2}}, 0, {\frac{1}{2}}, 0\right]$  \\[0.2em]
$(17,5,1)$ &$3 A_1+ 3 D_4$		& $\left[{\frac{1}{2}}, 0, {\frac{1}{2}}, {\frac{1}{2}}, 0, {\frac{1}{2}}, 0, {\frac{1}{2}}, 0, {\frac{1}{2}}, 0, {\frac{1}{2}}, 0, {\frac{1}{2}}, 0\right],
\left[0, {\frac{1}{2}}, {\frac{1}{2}}, {\frac{1}{2}}, 0, 0, {\frac{1}{2}}, {\frac{1}{2}}, 0, 0, {\frac{1}{2}}, {\frac{1}{2}}, 0, 0, {\frac{1}{2}}\right]$  \\[0.2em]
$(18,4,0)$ &$6 A_1+ D_{10}$		& $\left[{\frac{1}{2}}, {\frac{1}{2}}, {\frac{1}{2}}, 0, 0, 0, {\frac{1}{2}}, 0, {\frac{1}{2}}, 0, {\frac{1}{2}}, 0, {\frac{1}{2}}, 0, 0, {\frac{1}{2}}\right],
\left[0, 0, 0, {\frac{1}{2}}, {\frac{1}{2}}, {\frac{1}{2}}, {\frac{1}{2}}, 0, {\frac{1}{2}}, 0, {\frac{1}{2}}, 0, {\frac{1}{2}}, 0, {\frac{1}{2}}, 0\right]$  \\[0.2em]
$(18,4,0)$ &$4 D_4$				& $\left[{\frac{1}{2}}, 0, {\frac{1}{2}}, 0, {\frac{1}{2}}, 0, {\frac{1}{2}}, 0, {\frac{1}{2}}, 0, {\frac{1}{2}}, 0, {\frac{1}{2}}, 0, {\frac{1}{2}}, 0\right],
\left[{\frac{1}{2}}, 0, 0, {\frac{1}{2}}, {\frac{1}{2}}, 0, 0, {\frac{1}{2}}, {\frac{1}{2}}, 0, 0, {\frac{1}{2}}, {\frac{1}{2}}, 0, 0, {\frac{1}{2}}\right]$  \\[0.2em]
$(18,4,0)$ & {\color{green}$4 A_1+ 2 D_6$} & $\left[{\frac{1}{2}}, {\frac{1}{2}}, 0, 0, {\frac{1}{2}}, 0, {\frac{1}{2}}, 0, 0, {\frac{1}{2}}, {\frac{1}{2}}, 0, {\frac{1}{2}}, 0, 0, {\frac{1}{2}}\right],
\left[0, 0, {\frac{1}{2}}, {\frac{1}{2}}, {\frac{1}{2}}, 0, {\frac{1}{2}}, 0, {\frac{1}{2}}, 0, {\frac{1}{2}}, 0, {\frac{1}{2}}, 0, {\frac{1}{2}}, 0\right]$  \\[0.2em]
$(18,4,1)$ &  {\color{green}$4 A_1+ 2 D_6$	}& $\left[{\frac{1}{2}}, {\frac{1}{2}}, 0, {\frac{1}{2}}, {\frac{1}{2}}, 0, {\frac{1}{2}}, 0, 0, {\frac{1}{2}}, 0, 0, 0, 0, {\frac{1}{2}}, {\frac{1}{2}}\right],
\left[0, 0, {\frac{1}{2}}, {\frac{1}{2}}, {\frac{1}{2}}, 0, {\frac{1}{2}}, 0, {\frac{1}{2}}, 0, {\frac{1}{2}}, 0, {\frac{1}{2}}, 0, {\frac{1}{2}}, 0\right]$\\[0.2em]
$(18,4,1)$ &$4 A_1 + D_4 + D_8$	& $\left[{\frac{1}{2}}, {\frac{1}{2}}, 0, 0, {\frac{1}{2}}, 0, {\frac{1}{2}}, 0, {\frac{1}{2}}, 0, {\frac{1}{2}}, 0, {\frac{1}{2}}, 0, {\frac{1}{2}}, 0\right],
\left[0, 0, {\frac{1}{2}}, {\frac{1}{2}}, {\frac{1}{2}}, 0, 0, {\frac{1}{2}}, {\frac{1}{2}}, 0, {\frac{1}{2}}, 0, {\frac{1}{2}}, 0, 0, {\frac{1}{2}}\right]$  \\[0.2em]
$(18,4,1)$ &$2 A_1 + 2 D_4 + D_6$	& $\left[{\frac{1}{2}}, 0, {\frac{1}{2}}, 0, {\frac{1}{2}}, 0, {\frac{1}{2}}, 0, {\frac{1}{2}}, 0, {\frac{1}{2}}, 0, {\frac{1}{2}}, 0, 0, {\frac{1}{2}}\right],
\left[0, {\frac{1}{2}}, {\frac{1}{2}}, 0, 0, {\frac{1}{2}}, {\frac{1}{2}}, 0, 0, {\frac{1}{2}}, {\frac{1}{2}}, 0, {\frac{1}{2}}, 0, {\frac{1}{2}}, 0\right]$  \\[0.2em]
$(19,3,1)$ &$3 A_1 + D_6 + D_8$	& $\left[{\frac{1}{2}}, {\frac{1}{2}}, {\frac{1}{2}}, {\frac{1}{2}}, 0, {\frac{1}{2}}, 0, 0, {\frac{1}{2}}, 0, 0, 0, 0, 0, 0, {\frac{1}{2}}, {\frac{1}{2}}\right],
\left[0, 0, {\frac{1}{2}}, {\frac{1}{2}}, 0, {\frac{1}{2}}, 0, {\frac{1}{2}}, 0, {\frac{1}{2}}, 0, {\frac{1}{2}}, 0, {\frac{1}{2}}, 0, 0, {\frac{1}{2}}\right]$ \\[0.2em]
$(19,3,1)$ &$A_1 + D_4 + 2 D_6$	& $\left[{\frac{1}{2}}, {\frac{1}{2}}, 0, {\frac{1}{2}}, 0, {\frac{1}{2}}, 0, {\frac{1}{2}}, 0, 0, {\frac{1}{2}}, 0, 0, 0, 0, {\frac{1}{2}}, {\frac{1}{2}}\right],
\left[0, {\frac{1}{2}}, 0, 0, {\frac{1}{2}}, {\frac{1}{2}}, 0, {\frac{1}{2}}, 0, {\frac{1}{2}}, 0, {\frac{1}{2}}, 0, {\frac{1}{2}}, 0, {\frac{1}{2}}, 0\right]$ \\[0.2em]
$(20,2,1)$ &$2A_1 + 2 D_8$		& $\left[{\frac{1}{2}}, {\frac{1}{2}}, {\frac{1}{2}}, 0, {\frac{1}{2}}, 0, {\frac{1}{2}}, 0, {\frac{1}{2}}, 0, 0, 0, 0, 0, 0, 0, {\frac{1}{2}}, {\frac{1}{2}}\right],
\left[0, 0, {\frac{1}{2}}, 0, {\frac{1}{2}}, 0, {\frac{1}{2}}, 0, 0, {\frac{1}{2}}, {\frac{1}{2}}, 0, {\frac{1}{2}}, 0, {\frac{1}{2}}, 0, 0, {\frac{1}{2}}\right]$ \\[0.2em]
$(20,2,1)$ &$3 D_6$				& $\left[0, 0, 0, 0, {\frac{1}{2}}, {\frac{1}{2}}, {\frac{1}{2}}, 0, {\frac{1}{2}}, 0, 0, {\frac{1}{2}}, {\frac{1}{2}}, 0, {\frac{1}{2}}, 0, 0, {\frac{1}{2}}\right],
\left[{\frac{1}{2}}, 0, {\frac{1}{2}}, 0, {\frac{1}{2}}, 0, 0, 0, 0, 0, {\frac{1}{2}}, {\frac{1}{2}}, {\frac{1}{2}}, 0, {\frac{1}{2}}, 0, {\frac{1}{2}}, 0\right]$\\[0.2em]
\hline
\end{tabular}}
\captionsetup{justification=centering}
\caption{Vectors generating a $\mathbb{Z}_2 \times \mathbb{Z}_2$-overlattice of $K^{\text{root}}$}
\label{tab:max_vects_Z2xZ2_infinite}
\end{table}
\bibliographystyle{amsplain}
\bibliography{references}{}
\end{document}